\newtheorem{theorem}{Theorem}[section]
\newtheorem{lemma}[theorem]{Lemma}
\newtheorem{proposition}[theorem]{Proposition}
\theoremstyle{definition}
\newtheorem{definition}[theorem]{Definition}
\theoremstyle{remark}
\numberwithin{equation}{section}
\newcommand{\norm}[1]{\lVert#1\rVert}
\newcommand{\abs}[1]{\left\lvert#1\right\rvert}
\newcommand{\N}{\mathbb{N}} 
\newcommand{\R}{\mathbb{R}} 
\newcommand{\C}{\mathbb{C}} 
\newcommand{\K}{\mathbb{K}} 
\newcommand{\Lin}{\mathcal{L}}
\newcommand{\BN}{\mathcal{BN}}
\newcommand{\U}{\mathcal{U}}
\begin{document}

\setcounter{page}{1}

\title{Ideal of hypercyclic operators that factor through $\ell^p$}

\begin{center}
\author[A. G. AKSOY, Y. PUIG  ]{Asuman G\"{u}ven AKSOY, Yunied Puig}
\end{center}

\address{$^{*}$Department of Mathematics, Claremont McKenna College, 850 Columbia Avenue, Claremont, CA  91711, USA.}
\email{\textcolor[rgb]{0.00,0.00,0.84}{aaksoy@cmc.edu}}

\address{$^{1}$Department of Mathematics, Claremont McKenna College, 850 Columbia Avenue, Claremont, CA  91711, USA.}
\email{\textcolor[rgb]{0.00,0.00,0.84}{puigdedios@gmail.com}}


\subjclass[2010]{Primary 47A16, 47B10; Secondary 47A68}

\keywords{Weighted Shift, Hypercyclic Operator, Operator ideal.}


\begin{abstract}
We study the injective and surjective hull of operator ideals generated by hypercyclic backward weighted shifts that factor through $\ell^p$. 
\end{abstract} \maketitle

\section{Introduction}
 Let $T$ be a linear and compact operator acting from a Banach space $X$ into a Banach space $Y$.  We set $\mathcal{L}(X,Y)$  to denote the normed vector space of all continuous operators from $X$ to $Y$ and $\mathcal{K}(X,Y)$ be the collection of all compact operators from $X$ to $Y$.   The theory of compact linear operators between two Banach spaces $X$ and $Y $ has  a classical core and is familiar to many. For example, the well-known  classical factorization theorem due to  W. B. Johnson \cite{Jon}  asserts that every compact operator $T$ from a Banach space $X$ to a Banach space $Y$ factor compactly through a Banach space  $Z$  provided $T=P\circ Q$ and $P\in \mathcal{K}(Z, Y)$ and $Q\in \mathcal{K}(X,Z)$. 
 As an application for operators of a classical characterization of compact sets due to  Grothendieck \cite{Gr}, we have that $T$ compact implies
\begin{equation}
\label{eq_echo_park}
T(U_X)\subseteq \left \{\sum_n\alpha_ny_n:(\alpha_n)\in U_{\ell^1}\right \}
\end{equation}
 where $U_X$ denotes the open unit ball in $X$, $y_n\in Y$ for all $n$ and $\norm{y_n}\to 0$ as $n\to \infty$.  There is a close connection between factorization of compact operators through a close subspace of $c_0$ and above  representation of compact sets.  In this direction we mention a result due to T. Terzio\u{g}lu:
 \begin{theorem}[\cite{Ter1}]
Let $T\in \mathcal{L} (X,Y)$ bounded linear operator between Banach spaces $X$ and $Y$. Then the following are equivalent:
\begin{enumerate}
\item $T \in \mathcal{K}(X,Y)$,
\item There exists a norm-null sequence $(x_n^*)$ in $X^*$ such that 
$$ ||Tx|| \leq \sup_{n}| \langle x, x_n^*\rangle | \quad \forall x \in X,$$
\item For some closed subspace $Z$ of $c_0$ there are compact operators $P \in \mathcal{K} (X, Z)$ and $Q\in \mathcal{K}( Z, Y)$ such that $T= Q \circ P$.
\end{enumerate}
\end{theorem}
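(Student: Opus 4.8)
The plan is to establish the cycle of implications $(1)\Rightarrow(2)\Rightarrow(3)\Rightarrow(1)$. The implication $(3)\Rightarrow(1)$ is immediate, since the composition of a compact operator with a bounded one is again compact; thus the substance of the argument lies in the first two implications, with Grothendieck's representation \eqref{eq_echo_park} doing most of the heavy lifting.

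For $(1)\Rightarrow(2)$ I would pass to the adjoint. By Schauder's theorem $T^*\in\mathcal{K}(Y^*,X^*)$, so the set $K:=T^*(U_{Y^*})$ is relatively compact in $X^*$. Grothendieck's characterization of relatively compact sets — the same principle underlying \eqref{eq_echo_park} — then yields a norm-null sequence $(x_n^*)$ in $X^*$ whose closed absolutely convex hull $\overline{\mathrm{aco}}\{x_n^*\}$ contains $K$. Writing $\norm{Tx}=\sup_{y^*\in U_{Y^*}}\abs{\langle x,T^*y^*\rangle}=\sup_{a\in K}\abs{\langle x,a\rangle}$ and using that for $a=\sum_n\lambda_nx_n^*$ with $\sum_n\abs{\lambda_n}\le1$ one has $\abs{\langle x,a\rangle}\le\sup_n\abs{\langle x,x_n^*\rangle}$ (a bound preserved under the closure by continuity of $\langle x,\cdot\rangle$), I obtain $\norm{Tx}\le\sup_n\abs{\langle x,x_n^*\rangle}$ for every $x$, which is exactly $(2)$.

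For $(2)\Rightarrow(3)$ the natural candidate is $Px:=(\langle x,x_n^*\rangle)_n$; since $\norm{x_n^*}\to0$ this lands in $c_0$ and, being the operator-norm limit of its finite-rank truncations, is compact, and one sets $Z:=\overline{P(X)}\subseteq c_0$. The estimate in $(2)$ makes $Px\mapsto Tx$ well-defined and bounded on $P(X)$, giving $T=Q\circ P$ with $Q\in\mathcal{L}(Z,Y)$. The one genuine subtlety — and the step I expect to be the main obstacle — is forcing \emph{both} factors to be compact, since this bald factorization only delivers $Q$ bounded. I would remedy this by splitting the decay: discarding the (harmless) indices with $x_n^*=0$, write $a_n:=\norm{x_n^*}>0$ and factor $a_n=s_nt_n$ with $s_n,t_n\to0$ (for instance $s_n=t_n=a_n^{1/2}$), then redefine $P$ via the still norm-null functionals $s_n\,x_n^*/a_n$, so that $P$ stays compact while $(2)$ now reads $\norm{Tx}\le\sup_n t_n\abs{(Px)_n}$. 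Because the diagonal map $D_t\colon(z_n)\mapsto(t_nz_n)$ is compact on $c_0$ (as $t_n\to0$) and dominates $Q$ in norm, $Q$ factors as a norm-one operator composed with $D_t$ and is therefore compact as well; extending by density from $P(X)$ to $Z$ then completes the factorization. The remaining verifications — well-definedness, the truncation argument for compactness of $P$, and the continuity extension to $\overline{P(X)}$ — are routine.
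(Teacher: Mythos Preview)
The paper does not prove this statement: Terzio\u{g}lu's theorem is quoted in the introduction as background, attributed to \cite{Ter1}, and followed immediately by a reference to \cite{Asu} for further discussion, so there is no in-paper proof against which to compare your proposal.

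That said, your argument is sound and is essentially the classical one. The cycle $(1)\Rightarrow(2)$ via Schauder and Grothendieck's convex-hull description of compacta, $(2)\Rightarrow(3)$ via the evaluation map into $c_0$ with the square-root splitting $\|x_n^*\|=s_nt_n$ to force compactness of \emph{both} factors, and the trivial $(3)\Rightarrow(1)$, is exactly how this result is typically established. The only point worth tightening in a final write-up is the passage from ``$\|Qz\|\le\|D_tz\|$ on $P(X)$'' to ``$Q$ is compact on $Z$'': you should make explicit that this inequality extends to $Z=\overline{P(X)}$ by continuity, that $Q_0:D_t z\mapsto Qz$ is then well-defined and of norm at most one on $D_t(Z)$, and that $Q=Q_0\circ D_t|_Z$ with $D_t$ compact gives the conclusion. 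You have all of this in outline; it just needs to be said carefully since $D_t$ need not preserve $Z$.
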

 This factorization theorem has a number of important connections and consequences  which were discussed in detail in \cite{Asu}.   

 A linear and bounded operator $T$ acting on a separable infinite-dimensional Banach space $X$ is called \emph{hypercyclic} if there exists $x\in X$ such that $\{T^nx:n\geq 0\}$ is dense in $X$. It is well-known that hypercyclic operators can never be compact \cite{BaMa}. We concentrate our attention on operators that can be hypercyclic and satisfy an expression in the vein of (\ref{eq_echo_park}). More precisely, we are interested in studying operators satisfying
\begin{equation}
\label{eq_silver_lake}
T(U_X)\subseteq \left\{\sum_n\alpha_n y_n: (\alpha_n)\in U_{\ell^{q'}}\right\}
\end{equation}
where $(y_n)$ is a weakly $q$-summable sequence in $X$ with $1/q+1/q'=1$. These operators can be hypercyclic. Indeed, weighted backward shifts are examples of operators satisfying (\ref{eq_silver_lake}).  Since it is shown by H. Salas  in \cite{Sal} that every operator of the form $ \lambda I +$ (backward shift) is  hypercyclic and  due to the importance of weighted backward shifts in linear dynamics, we  now focus our interest on studying operators satisfying (\ref{eq_silver_lake}).

 In section 3 below, we introduce the ideal of $(w, p, q)$-backward nuclear operators, i.e., ~the ideal of operators generated by a weighted backward shift acting from $\ell^{q'}$ into $\ell^p$, denoted by $\BN_{(w, p, q)}$.  Using the standard techniques which were first systematized in \cite{Pie}, we show that the injective hull of $\BN_{(w, p, q)}$ is composed by operators satisfying conditions in the spirit of Terzio\u{g}lu (Theorem \ref{theorem_istanbul} and Theorem \ref{theorem_havana}). We  also prove the elements of the surjective hull of $\BN_{(w, p, q)}$ satisfy (\ref{eq_silver_lake}) (Theorem \ref{theorem_miami}). Moreover, we show the dual operator ideal of the surjective (injective) hull of $\BN_{(w, p, q)}$ is its injective (surjective) hull (Theorem \ref{theorem_orlando} and Theorem \ref{theorem_westpalmbeach}).

\section{Preliminaries}
In this section we define some  notations, concepts and mention some theorems that we will use later.

Let $\K=\R$ or $\C$ and $\mathcal{L}$ denotes the class of all linear and bounded operators between arbitrary Banach spaces. The identity map of $E$ is denoted by $I_E$.

An \emph{operator ideal} $\U$ is a subclass of $\Lin$ such that the components $\U(E, F):=\U \cap \Lin(E, F)$ satisfy the following conditions:

(1) $I_{\K}\in \U$,

(2) if $S_1, S_2\in \U(E, F)$ then $S_1+S_2\in \U(E, F)$,

(3) if $T\in \Lin(E_0, E), S\in \U(E, F)$ and $R\in \Lin(F, F_0)$, then $RST\in \U(E_0, F_0)$.\\

 Let $\U$ be an operator ideal, the map $\norm{\cdot}_\U: \U\to \R^+$ is called an \emph{$s$-norm on the operator ideal $\U$} with $0<s\le 1$ if the following are satisfied:

(1) $\norm{I_{\K}}=1$,

(2) $\norm{T_1+T_2}^s_\U\le \norm{T_1}^s_\U$ + $\norm{T_2}^s_{\U}$,

(3) if $S\in \Lin (X_0, X)$, $T\in \U (X, Y)$ and $R\in \Lin (Y, Y_0)$ then $\norm{RTS}_\U\le \norm{R} \norm{T}_\U \norm{S}$.\\

 If each $\U(X, Y)$ is a Banach space with respect to the ideal $s$-norm $\norm{\cdot}_\U$, then $(\U, \norm{\cdot}_\U)$ is called an \emph{$s$-Banach operator ideal}.

 Denote by $\U^{inj}, \U^{sur}, \U^{dual}$ and $\U^{reg}$ the injective, surjective, dual and regular hull of $\U$ respectively, in the sense of Pietsch, see chapter 4 \cite{PieID}.

Let $X^*$ be the dual space of $X$, $a\in X^*$ and $y\in Y$. Define $a\otimes y\in \Lin(X, Y)$ as $(a\otimes y) (x):=a(x)y$. Denote by $T^*$ the adjoint operator of any $T\in \Lin (X, Y)$ and $p'$ the conjugate of $p\geq 1$, i.e. $1/p+1/p'=1$.

Let $I$ be an arbitrary index set. Denote by $\ell_p(I)$, the \emph{Banach space of all absolutely p-summable scalar families} $x=(\epsilon_i)_{i\in I}$.

Recall that for $1\leq p \leq\infty$, the space of all \emph{weakly $p$-summable sequences} in a Banach space $X$ is defined as \[l^w_p(X)= \{(x_i)\in X^{\mathbb{N}}: (a(x_i))_i\in l_p, \forall a\in X^*\},\]

with norm 
\[ \norm{{(x_i)}}^w_p=\sup \{ \norm{{(a(x_i))_i}}_p: a\in X^*, \norm{{a}}\leq 1\}.
\]

 In the same way, the space of all \emph{weak* $p$-summable sequences} in $X^*$ is defined as \[l^{w*}_p(X^*)= \{(x_i)\in (X^*)^{\mathbb{N}}: (x_i(x))_i\in l_p, \forall x\in X\},\]

with norm 
\[ \norm{{(x_i)}}^{w^*}_p=\sup\{\norm{{(x_i(x))_i}}_p: x\in X, \norm{{x}}\leq 1\}.
\]
For more about these spaces see \cite{Gr1}.

\section{Results}

\subsection{The operator ideal of $(w, p, q)$-backward nuclear operators}
Classical $(r, p, q)$-nuclear operators, in the sense of Pietsch (18.1 \cite{PieID}) are always compact, hence not  hypercyclic.  As we are interested in hypercyclicity in connection with $(r, p, q)$-nuclear operators, we introduce the following.
\begin{definition}
Let $1\le p, q \le \infty$ such that $1/p+1/q\le 1$. An operator $T\in \Lin (X, Y)$ is called \emph{$(w, p, q)$-backward nuclear} if 
\[
T=\sum_{i=2}^\infty w_ia_i \otimes y_{i-1}
\]
\end{definition}
with $(w_i)\in \ell^\infty$, $(a_i)\in \ell^{w^*}_{q'}(X^*)$ and $(y_i)\in \ell^w_{p'}(Y)$.

Define 
\[
\norm{T}_{(w, p, q)}=\inf \norm{w}_\infty \norm{(a_i)}^{w^*}_{q'}\norm{(y_i)}^w_{p'},
\]
where the infimum is taken over all $(w, p, q)$-backward nuclear representations described above.

Denote by $\BN_{(w, p, q)}$ the class of all $(w, p, q)$-backward nuclear operators. Note that weighted backward shifts on $\ell^p$ are examples of $(w, p, p')$-backward nuclear operators, as they are defined as $B_w e_n:=w_n e_{n-1}, n\geq 1$ with $B_w e_0:=0$, where $(e_n)_{n\in \N}$ denotes the canonical basis of $\ell^p$ and  $B_w$ is the unilateral weighted backward shift operator.

\begin{theorem}
\label{theorem_guantanamo}
Let $1/s=1/p'+1/q'$, then $(\BN_{(w, p, q)}, \norm{\cdot}_{(w, p, q)})$ is an $s$-Banach operator ideal.
\end{theorem}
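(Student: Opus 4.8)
The plan is to check three things in turn: the ideal axioms (1)--(3) for $\BN_{(w,p,q)}$, the $s$-norm axioms (1)--(3) for $\norm{\cdot}_{(w,p,q)}$ with $1/s = 1/p' + 1/q'$, and the $\norm{\cdot}_{(w,p,q)}$-completeness of each component. First I would record the basic fact that an admissible triple $(w_i)\in\ell^\infty$, $(a_i)\in\ell^{w^*}_{q'}(X^*)$, $(y_i)\in\ell^w_{p'}(Y)$ genuinely defines a bounded operator. For fixed $x$, the scalars $\lambda_i := w_i a_i(x)$ satisfy $\norm{(\lambda_i)}_{q'}\le\norm{w}_\infty\norm{(a_i)}^{w^*}_{q'}\norm{x}$; the hypothesis $1/p+1/q\le 1$ is exactly $q'\le p$, so $(\lambda_i)\in\ell_p$ and $\sum_i\lambda_i y_{i-1}$ converges in $Y$ by the duality between $\ell_p$ and $\ell^w_{p'}(Y)$, with norm at most $\norm{(\lambda_i)}_p\norm{(y_i)}^w_{p'}$. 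Hence $T=\sum_i w_i a_i\otimes y_{i-1}$ converges in the strong operator topology (not, in general, in operator norm) and satisfies $\norm{T}\le\norm{T}_{(w,p,q)}$; this last inequality is what will let me identify limits in the completeness step.

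The ideal axioms and two of the three $s$-norm axioms are then routine. Writing $I_\K$ as the single term $a\otimes y$ with $a=y=1$ (padded by zeros to meet the index range) gives $I_\K\in\BN_{(w,p,q)}$ and $\norm{I_\K}_{(w,p,q)}\le 1$; for the reverse bound, any scalar representation $1=\sum_i w_i a_i y_{i-1}$ obeys $1\le\norm{w}_\infty\norm{(a_i)}_{q'}\norm{(y_{i-1})}_q\le\norm{w}_\infty\norm{(a_i)}_{q'}\norm{(y_{i-1})}_{p'}$ by H\"older together with the inclusion $\ell_{p'}\subseteq\ell_q$ (again $q\ge p'$), so $\norm{I_\K}_{(w,p,q)}=1$. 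The two-sided ideal property rests on the identity $R(a\otimes y)S=(S^*a)\otimes(Ry)$, which termwise turns a representation $T=\sum_i w_i a_i\otimes y_{i-1}$ into $RTS=\sum_i w_i (S^*a_i)\otimes(Ry_{i-1})$; since $\norm{(S^*a_i)}^{w^*}_{q'}\le\norm{S}\,\norm{(a_i)}^{w^*}_{q'}$ and $\norm{(Ry_i)}^w_{p'}\le\norm{R}\,\norm{(y_i)}^w_{p'}$ while $\norm{w}_\infty$ is untouched, the infimum gives $\norm{RTS}_{(w,p,q)}\le\norm{R}\,\norm{T}_{(w,p,q)}\,\norm{S}$, which is at once ideal axiom (3) and $s$-norm axiom (3). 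Closure under addition (axiom (2)) is subsumed by the concatenation used next.

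The heart of the proof is the $s$-triangle inequality. Given $\varepsilon>0$ and near-optimal representations of $T_1,T_2$, I would normalize each so that $\norm{w^{(j)}}_\infty=1$ and then use the scaling freedom $a^{(j)}\mapsto t_j a^{(j)}$, $y^{(j)}\mapsto t_j^{-1}y^{(j)}$ to arrange $\big(\norm{(a^{(j)})}^{w^*}_{q'}\big)^{q'}=\big(\norm{(y^{(j)})}^w_{p'}\big)^{p'}=d_j^{s}$, where $d_j$ is the (scaling-invariant) product $\approx\norm{T_j}_{(w,p,q)}$; this balancing is solvable precisely because $1/q'+1/p'=1/s$. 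Concatenating the two representations yields a representation of $T_1+T_2$ whose weight sequence still has sup-norm $1$, and the subadditivity of weakly summable sequences under concatenation, $\big(\norm{\mathrm{concat}}^{w^*}_{q'}\big)^{q'}\le\sum_j\big(\norm{(a^{(j)})}^{w^*}_{q'}\big)^{q'}$ and likewise for the $y$'s with exponent $p'$, bounds the combined product by $\big(\sum_j d_j^{s}\big)^{1/q'}\big(\sum_j d_j^{s}\big)^{1/p'}=\big(\sum_j d_j^{s}\big)^{1/s}$. Letting $\varepsilon\to 0$ gives $\norm{T_1+T_2}^s_{(w,p,q)}\le\norm{T_1}^s_{(w,p,q)}+\norm{T_2}^s_{(w,p,q)}$.

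For completeness I would invoke the standard criterion that a space carrying an $s$-norm is complete if and only if every series $\sum_k T_k$ with $\sum_k\norm{T_k}^s_{(w,p,q)}<\infty$ converges. Choosing representations with products $d_k$ satisfying $\sum_k d_k^{s}<\infty$, normalizing and rescaling exactly as above, and concatenating over both indices, the same subadditivity produces one admissible triple with product $\le\big(\sum_k d_k^{s}\big)^{1/s}$; by the preliminary step it defines $T\in\BN_{(w,p,q)}(X,Y)$, and the identical bound applied to the tail blocks gives $\norm{T-\sum_{k\le n}T_k}_{(w,p,q)}\to 0$. The genuinely delicate point, and where I expect the main obstacle, is verifying that this concatenated operator actually equals $\sum_k T_k$: since the defining series converges only strongly, I would fix $x$, regard $(w_j a_j(x))_j\in\ell_p$, and exploit that $(\mu_j)\mapsto\sum_j\mu_j y_{j-1}$ is a bounded linear map $\ell_p\to Y$ in order to interchange the sum with the regrouping of an $\ell_p$-convergent series into its consecutive $k$-blocks, yielding $Tx=\sum_k T_k x$. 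The boundary cases $p=\infty$ or $q=\infty$, where ordinary $\ell$-summability must be replaced by its uniform-tail analogue, I would treat separately.
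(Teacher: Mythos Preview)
Your proposal is correct and follows essentially the same route as the paper: normalize the weight sequences to have $\ell^\infty$-norm $1$, then use the scaling freedom to balance the $q'$- and $p'$-norms so that concatenation yields the $s$-subadditivity, exactly as you do. The only organizational difference is that the paper invokes Pietsch's criterion (Theorem 6.2.3 of \cite{PieID}), which packages the $s$-triangle inequality and completeness into the single hypothesis ``$\sum_n\norm{T_n}^s<\infty$ implies $T=\sum_n T_n\in\U$ with $\norm{T}^s\le\sum_n\norm{T_n}^s$''; this lets the paper skip your separate two-term and infinite-series arguments, and it also absorbs the ``delicate point'' you flag about identifying the concatenated operator with $\sum_k T_k$ (though the paper, unlike you, does not pause to justify that identification).
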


 In order to prove Theorem \ref{theorem_guantanamo} we need the following criterion provided in \cite{PieID}.

\begin{lemma}[Theorem 6.2.3 \cite{PieID}]
\label{lemma_criterion}
 Let $0<s\le 1$ and $\U$ be a subclass of $\Lin$ with an $\R^+$-valued function $\norm{\cdot}_{\U}$ satisfying:

(1) $I_{\mathcal{K}}\in \U$ with $\norm{I_{\mathcal{K}}}_{\U}=1$,

(2) $T_1, T_2, \dots\in \U(X, Y)$ and $\sum_{n=1}^\infty \norm{T_n}_\U^s<\infty$ implies $T:=\sum_{n=1}^\infty T_n\in \U (X, Y)$ with $\norm{T}^s_\U\le \sum_{n=1}^\infty \norm{T_n}^s_\U$,

(3) $S\in \Lin (X_0, X)$, $T\in \Lin (X, Y)$ and $R\in \Lin (Y, Y_0)$ imply $RTS\in \U (X_0, Y_0)$ and $\norm{RTS}_\U \le \norm{R} \norm{T}_\U \norm{S}$.

Then, $(\U, \norm{\cdot}_\U)$ is an $s$-Banach operator ideal.

\end{lemma}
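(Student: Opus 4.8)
The plan is to verify the three defining requirements of an $s$-Banach operator ideal in turn: that $\U$ is an operator ideal, that $\norm{\cdot}_\U$ is an $s$-norm, and that each component $\U(X,Y)$ is $\norm{\cdot}_\U$-complete. The first two are essentially a repackaging of hypotheses (1)--(3). Conditions (1) and (3) immediately supply axiom (1) and axiom (3) of both the ideal and the $s$-norm. For the zero operator, writing $0_{XY}=R\,I_\K\,S$ with $S:X\to\K$ and $R:\K\to Y$ the zero maps and invoking (3) shows $0\in\U(X,Y)$ with $\norm{0}_\U=0$. Applying (2) to the sequence $S_1,S_2,0,0,\dots$ then yields simultaneously that $S_1+S_2\in\U(X,Y)$ (ideal axiom (2)) and that $\norm{S_1+S_2}_\U^s\le\norm{S_1}_\U^s+\norm{S_2}_\U^s$ ($s$-norm axiom (2)). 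Using (3) with $R=-I_Y$ gives $-T\in\U$ and $\norm{-T}_\U=\norm{T}_\U$, so $\U$ is closed under differences and the $s$-triangle inequality extends to them.

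Before addressing completeness I would record the domination $\norm{T}\le\norm{T}_\U$ for all $T\in\U$, since this is what links the two relevant topologies. Fix $x\in X$ and $a\in Y^*$ and factor the scalar $a(Tx)$ through $\K$: with $S:\lambda\mapsto\lambda x$ and $R:y\mapsto a(y)$ one checks that $R\,T\,S=a(Tx)\,I_\K$ as an operator on $\K$. A short computation from (1) and (3) gives $\norm{c\,I_\K}_\U=\abs{c}$ for every scalar $c$, whence
\[
\abs{a(Tx)}=\norm{R\,T\,S}_\U\le\norm{R}\,\norm{T}_\U\,\norm{S}=\norm{a}\,\norm{T}_\U\,\norm{x}.
\]
Taking the supremum over $\norm{x}\le 1$ and $\norm{a}\le 1$ gives $\norm{T}\le\norm{T}_\U$, so every $\norm{\cdot}_\U$-convergent or $\norm{\cdot}_\U$-Cauchy sequence is also convergent, resp. Cauchy, in the operator norm.

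The remaining and principal point is completeness of each $(\U(X,Y),\norm{\cdot}_\U)$, and this is where I expect the real work to lie. Let $(T_k)$ be $\norm{\cdot}_\U$-Cauchy. By the domination it is also operator-norm Cauchy, so $T_k\to T$ in $\Lin(X,Y)$ for some $T$, since $\Lin(X,Y)$ is a Banach space; the difficulty is that $\norm{\cdot}_\U$-convergence is a priori strictly stronger, so I must manufacture the limit inside $\U$ rather than read it off from the operator-norm limit. I would pass to a subsequence $(T_{k_j})$ with $\norm{T_{k_{j+1}}-T_{k_j}}_\U^s<2^{-j}$, set $R_0=T_{k_1}$ and $R_j=T_{k_{j+1}}-T_{k_j}$, and observe $\sum_j\norm{R_j}_\U^s<\infty$. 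Hypothesis (2) then gives $\sum_j R_j\in\U(X,Y)$, its partial sums telescope to $T_{k_{m+1}}$, and
\[
\Bigl\lVert T_{k_{m+1}}-\sum_{j\ge 0}R_j\Bigr\rVert_\U^s\le\sum_{j>m}\norm{R_j}_\U^s\longrightarrow 0,
\]
so $T_{k_j}\to\sum_j R_j$ in $\norm{\cdot}_\U$ and hence, by the domination, in operator norm. Comparing with $T_k\to T$ in operator norm forces $T=\sum_j R_j\in\U(X,Y)$. Finally, the $s$-triangle estimate $\norm{T_k-T}_\U^s\le\norm{T_k-T_{k_j}}_\U^s+\norm{T_{k_j}-T}_\U^s$, together with the Cauchy property, upgrades this subsequential convergence to $\norm{T_k-T}_\U\to 0$ for the whole sequence, completing the proof.
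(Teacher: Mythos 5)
Your proof is correct, but there is nothing in the paper to compare it against: the paper states this lemma with an attribution to Theorem 6.2.3 of Pietsch's \emph{Operator ideals} \cite{PieID} and gives no proof, using it only as an imported criterion in the proof of Theorem \ref{theorem_guantanamo}. Your argument is in substance the standard one behind Pietsch's criterion: extract the algebraic ideal and $s$-norm axioms from (1)--(3) (your device of applying hypothesis (2) to the sequence $S_1, S_2, 0, 0, \dots$, after producing the zero operator via (3), is exactly how the finite $s$-triangle inequality is read off the countable one), establish the domination $\norm{T}\le \norm{T}_\U$ by factoring $a(Tx)\,I_\K=RTS$ through $\K$, and then obtain completeness from the fact that an $s$-normed space is complete as soon as every series with $\sum_j \norm{R_j}^s_\U<\infty$ converges in it --- your subsequence/telescoping argument is precisely the standard proof of that equivalence, and hypothesis (2) supplies the needed series convergence, with the tail estimate $\norm{T_{k_{m+1}}-\sum_{j\ge 0}R_j}^s_\U\le \sum_{j>m}\norm{R_j}^s_\U$ coming from (2) applied to the tail. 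Two points you use silently deserve a word: hypothesis (3) as printed says $T\in\Lin(X,Y)$ where $T\in\U(X,Y)$ is plainly intended (otherwise (3) would force $\U=\Lin$), and you correctly work with the corrected form; and for the sum in hypothesis (2) to define an operator at all, one needs the series to converge in $\Lin(X,Y)$, which your domination inequality justifies, since $\sum_n\norm{T_n}^s_\U<\infty$ with $s\le 1$ gives $\norm{T_n}_\U\le 1$ for all but finitely many $n$, hence $\norm{T_n}\le\norm{T_n}_\U\le\norm{T_n}^s_\U$ there and $\sum_n\norm{T_n}<\infty$. Your final upgrade from subsequential to full-sequence $\norm{\cdot}_\U$-convergence via the $s$-triangle inequality is also sound, so the proposal is a complete and faithful reconstruction of the cited result.
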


\begin{proof}[Proof of Theorem \ref{theorem_guantanamo}]
 Let us see that conditions (1)-(3) of Lemma \ref{lemma_criterion} are satisfied.

 Let us obtain (1). Note that $I_\K=1\otimes 1$, so $I_\K\in \BN_{(w, p, q)}$ and $\norm{I_\K}_{(w, p, q)}\le 1$. Moreover, let $I_\K=\sum_{i=2}^\infty w_ia_i\otimes y_{i-1}$ be any $(w, p, q)$-backward nuclear representation. Then, $1=\sum_{i=2}^\infty w_ia_iy_{i-1}$ and
\[
1\le (\sum_{i=2}^\infty\abs{w_ia_iy_{i-1}})^{1/s}\le (\sum_{i=2}^\infty\abs{w_ia_iy_{i-1}}^s)^{1/s}\le \norm{w}_\infty \norm{(a_i)}^{w^*}_{\ell^{q'}}\norm{(y_i)}^w_{p'}.
\]
Hence, $1\le \norm{I_\K}_{(w, p, q)}$ and (1) is obtained.

Let us establish (2). Let $T_1, T_2, \dots\in \BN_{(w, p, q)}(X, Y)$ and $\sum_{n=1}^\infty \norm{T_n}^s_{(w, p, q)}<\infty$. Given $\varepsilon >0$, choose $(w, p, q)$-backward nuclear representations
\[
T_n=\sum_{i=2}^\infty w_{n, i}a_{n, i}\otimes y_{n, i-1}
\]
such that for every $n\ge 1$
\[
\norm{(w_{n, i})_i}_\infty \norm{(a_{n, i})_i}^{w^*}_{q'}\norm{(y_{n, i-1})_i}^w_{p'}\le (1+\varepsilon) \norm{T_n}_{(w, p, q)}.
\]
We may suppose that
\begin{align*}
&\norm{(w_{n, i})_i}_\infty\le 1\\
&\norm{(a_{n, i})_i}^{w^*}_{q'}\le \big((1+\varepsilon)\norm{T_n}_{(w, p, q)}\big)^{s/q'}\\
&\norm{(y_{n, i-1})_i}^w_{p'}\le \big((1+\varepsilon)\norm{T_n}_{(w, p, q)}\big)^{s/p'}
\end{align*}

which implies
\begin{align*}
&\norm{(w_{n, i})_{n, i}}_\infty\le 1\\
&\norm{(a_{n, i})_{n, i}}^{w^*}_{q'}\le \big((1+\varepsilon)^s \sum_{n=1}^\infty \norm{T_n}_{(w, p, q)}^s\big)^{1/q'}\\
&\norm{(y_{n, i-1})_{n, i}}^w_{p'}\le \big((1+\varepsilon)^s \sum_{n=1}^\infty \norm{T_n}_{(w, p, q)}^s\big)^{1/p'}.
\end{align*}

Consequently, $T:=\sum_{n=1}^\infty \sum_{i=2}^\infty w_{n, i}a_{n, i}\otimes y_{n, i-1}\in \BN_{(w, p, q)}(X, Y)$ and
\[
\norm{T}^s_{(w, p, q)}\le (1+\varepsilon)^s \sum_{n=1}^\infty \norm{T_n}^s_{(w, p, q)}, 
\]
and (2) is obtained.

Finally, let us obtain (3). Let $S\in \Lin (X_0, X)$, $T\in \BN_{(w, p, q)}(X, Y)$, $R\in \Lin (Y, Y_0)$ and $\varepsilon >0$. Consider the $(w, p, q)$-backward nuclear representation
\[
T=\sum_{i=2}^\infty w_ia_i\otimes y_{i-1}
\]
such that $\norm{w}_\infty \norm{(a_i)}^{w^*}_{q'}\norm{(y_i)}^w_{p'}\le (1+\varepsilon)\norm{T}_{(w, p, q)}$. Then, $RTS=\sum_{i=2}^\infty w_i S^*a_i\otimes Ry_{i-1}$ and
\[
\norm{RTS}_{(w, p, q)}\le \norm{w}_\infty \norm{(S^*a_i)}^{w^*}_{q'}\norm{(Ry_i)}^w_{p'}\le \norm{R}\norm{T}_{(w, p, q)}\norm{S}.
\]
\end{proof}

We have the following factorization property of $\BN_{(w, p, q)}$.

\begin{theorem}
Let $T\in \Lin(X, Y)$ and $B_w\in \Lin(\ell^{q'}, \ell^p)$ be a unilateral backward weighted shift. The following are equivalent:

(1) $T\in \BN_{(w, p, q)}(X, Y)$,

(2) there exists $S\in \Lin (X, \ell^{q'})$ and $R\in \Lin (\ell^p, Y)$ such that $T=RB_wS$.

Moreover, $\norm{T}_{(w, p, q)}=\inf \norm{R}\norm{B_w}\norm{S}$ where the infimum is taken over all possible factorizations.
\end{theorem}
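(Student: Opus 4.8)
The plan is to read off $S$ and $R$ directly from the data of a backward nuclear representation, exploiting the two isometric identifications that underlie the whole theory: an operator $S\in\Lin(X,\ell^{q'})$ is the same datum as a weak$^*$ $q'$-summable sequence $(a_i)\subset X^*$ via $a_i:=S^{*}e_i^{*}$, with $\norm{S}=\norm{(a_i)}^{w^*}_{q'}$; and an operator $R\in\Lin(\ell^{p},Y)$ is the same datum as a weakly $p'$-summable sequence $(y_j)\subset Y$ via $y_j:=Re_j$, with $\norm{R}=\norm{(y_j)}^{w}_{p'}$ (the latter because $\sup_{\norm{\xi}_p\le 1}\norm{\sum_j\xi_jy_j}=\sup_{\norm{a}\le 1}\norm{(a(y_j))_j}_{p'}$ by $\ell^p$--$\ell^{p'}$ duality). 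The third ingredient is the norm of the shift itself: since $1/p+1/q\le 1$ forces $q'\le p$, one has $\norm{(w_nx_n)_n}_p\le\norm{w}_\infty\norm{(x_n)_n}_p\le\norm{w}_\infty\norm{(x_n)_n}_{q'}$, while testing on $e_k$ gives the reverse bound, so $\norm{B_w}=\norm{w}_\infty$. This last identity is exactly what turns the norm inequalities into the claimed equality.

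For $(1)\Rightarrow(2)$, I would start from a representation $T=\sum_{i\ge 2}w_i a_i\otimes y_{i-1}$ with $(a_i)\in\ell^{w^*}_{q'}(X^*)$ and $(y_i)\in\ell^{w}_{p'}(Y)$. Define $S\colon X\to\ell^{q'}$ by $Sx:=(a_i(x))_i$ and $R\colon\ell^{p}\to Y$ by $R(\xi):=\sum_j\xi_j y_j$; the two identifications show $S,R$ are bounded with $\norm{S}=\norm{(a_i)}^{w^*}_{q'}$ and $\norm{R}=\norm{(y_j)}^{w}_{p'}$. Letting $B_w$ be the backward shift with weights $(w_n)$, a direct computation gives, for $x\in X$,
\[
RB_wSx=RB_w\big((a_i(x))_i\big)=R\Big(\sum_{n\ge 1}w_n a_n(x)e_{n-1}\Big)=\sum_{n\ge 1}w_n a_n(x)\,y_{n-1}=Tx,
\]
so $T=RB_wS$. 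Combining the norm identities with $\norm{B_w}=\norm{w}_\infty$ yields $\norm{R}\,\norm{B_w}\,\norm{S}=\norm{(y_j)}^{w}_{p'}\norm{w}_\infty\norm{(a_i)}^{w^*}_{q'}$, and taking the infimum over all representations gives $\inf\norm{R}\norm{B_w}\norm{S}\le\norm{T}_{(w,p,q)}$.

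For $(2)\Rightarrow(1)$, I would reverse the construction: given $T=RB_wS$, set $a_i:=S^{*}e_i^{*}\in X^*$ and $y_{i-1}:=Re_{i-1}\in Y$. The same identifications give $\norm{(a_i)}^{w^*}_{q'}=\norm{S}$ and $\norm{(y_{i-1})}^{w}_{p'}=\norm{R}$, and expanding $Sx=(a_i(x))_i$ and applying $B_w$ then $R$ recovers $T=\sum_{i\ge 2}w_i a_i\otimes y_{i-1}$, so $T\in\BN_{(w,p,q)}(X,Y)$. Here $\norm{T}_{(w,p,q)}\le\norm{w}_\infty\norm{(a_i)}^{w^*}_{q'}\norm{(y_{i-1})}^{w}_{p'}=\norm{B_w}\norm{S}\norm{R}$, and passing to the infimum over factorizations gives the reverse inequality; together with the previous step this is the asserted equality.

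The routine part is verifying the two identifications (standard duality between weak$(^*)$ summable sequences and operators into or out of $\ell^{p}$ spaces). The step that genuinely uses the hypotheses, and which I expect to be the crux, is the computation $\norm{B_w}=\norm{w}_\infty$: it relies on $q'\le p$ (equivalently $1/p+1/q\le 1$), and it is precisely this identity that makes $\norm{w}_\infty$ in the definition of $\norm{\cdot}_{(w,p,q)}$ coincide with $\norm{B_w}$, thereby delivering an isometric factorization rather than a merely bounded one. A minor bookkeeping point is the index shift between $e_n\mapsto w_n e_{n-1}$ and the summation starting at $i=2$, which must be tracked so that the weights, functionals, and vectors line up correctly.
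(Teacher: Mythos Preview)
Your argument is correct and follows essentially the same route as the paper: build $S$ and $R$ from the sequences $(a_i)$ and $(y_i)$ via the standard isometric identifications, check $T=RB_wS$, and for the converse read off a $(w,p,q)$-backward nuclear representation from a given factorization (the paper phrases this last step via the ideal property applied to $B_w=\sum_{i\ge 2}w_i e_i^*\otimes e_{i-1}$, which amounts to your computation $a_i=S^*e_i^*$, $y_{i-1}=Re_{i-1}$). Your explicit verification that $\norm{B_w}=\norm{w}_\infty$ (using $q'\le p$) is a detail the paper simply asserts; just be careful with the index bookkeeping you flagged, since in your displayed computation the sum should start at $n\ge 2$ rather than $n\ge 1$.
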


\begin{proof}
Let $T\in \BN_{(w, p, q)}(X, Y)$ and $\varepsilon >0$. Consider a backward nuclear representation $T=\sum_{i=2}^\infty w_i a_i\otimes y_{i-1}$ such that $\norm{w}_\infty \norm{(a_i)}^{w*}_{q'}\norm{(y_i)}^w_{p'}\leq (1+\varepsilon) N_{(w, p, q)}(T)$. Consider the operators $S\in \Lin(X, \ell_{q'})$ and $R\in \Lin (\ell_p, Y)$ defined as $Sx=(a_i(x))_i$ for any $x\in X$ and $R((\nu_i))=\sum_{i=1}^\infty \nu_i y_i$ for any $(\nu_i)_i\in \ell_p$. It is well-known that $\norm{S}=\norm{(a_i)}^{w*}_{q'}$ and $\norm{T}=\norm{(y_i)}^w_{p'}$. Hence, $\norm{B_w}_\infty \norm{S}\norm{T}\leq (1+\varepsilon) N_{(w, p, q)}(T)$ and $RB_wS=T$. Conversely, by definition $B_w=\sum_{i=2}^\infty w_ie_i^* \otimes e_{i-1}\in \BN_{(w, p, q)}(\ell^{q'}, \ell^p) $ with $N_{(w, p, q)}(B_w)\leq \norm{w}_\infty=\norm{B_w}$, where $e_i^*(e_i)=1$ and $e_i^*(e_j)=0$ for all $i\neq j$. Since $\BN_{(w, p, q)}$ is an operator ideal, we have that $T=RB_wS=\sum_{i=2}^\infty w_i S^*e_i^*\otimes R e_{i-1}\in \BN_{(w, p, q)}(X, Y)$ and 
\[
N_{(w, p, q)}(T)\leq \norm{R}N_{(w, p, q)}(B_w)\norm{S}\leq \norm{R}\norm{B_w}\norm{S}.
\]

\end{proof}

\subsection {The injective hull of the operator ideal $(\BN_{(w, p, q)}, \norm{\cdot}_{(w, p, q)})$}
 Let us focus on the injective hull of  the operator ideal  $(\BN_{(w, p, q)}, \norm{\cdot}_{(w, p, q)})$.

 Let $X$ be a Banach space, denote $X^\infty:=\ell_\infty(U_{X^*})$ and the injection $J_X: X\to X^\infty$ defined as $J_X x:=(a(x))_{a\in U_{ X^*}}$.

\begin{theorem}
\label{theorem_istanbul}
Let $T\in \Lin(X, Y)$ and $B_w\in \Lin(\ell^{q'}, \ell^p)$ be a unilateral backward weighted shift. The following are equivalent:

(1) $T\in \BN_{(w, p, q)}^{inj}(X, Y)$,

(2) there exist a closed subspace $F$ of $\ell^p$, $S\in \Lin(X, \ell^{q'})$ and $R\in \Lin(F, Y)$ such that $T=R B_w S$. Moreover, 
\begin{equation}
\label{eq_giotto}
\norm{T}^{inj}_{(w, p, q)}=\inf \norm{R}\norm{B_w}\norm{S},
\end{equation}
where the infimum is taken over all possible factorizations, as described above.
\end{theorem}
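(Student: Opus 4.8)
The plan is to reduce the statement to the preceding factorization theorem by combining Pietsch's description of the injective hull with the $1$-injectivity of the spaces $Y^\infty=\ell_\infty(U_{Y^*})$. Recall that by definition $T\in\BN_{(w,p,q)}^{inj}(X,Y)$ precisely when $J_YT\in\BN_{(w,p,q)}(X,Y^\infty)$, and in that case $\norm{T}^{inj}_{(w,p,q)}=\norm{J_YT}_{(w,p,q)}$. The whole argument then consists in transferring the factorization $J_YT=R_0B_wS$ (through all of $\ell^p$, into $Y^\infty$) to a factorization of $T$ itself (through a closed subspace $F$ of $\ell^p$, into $Y$), and conversely.

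For the implication (1)$\Rightarrow$(2), I would start from $J_YT\in\BN_{(w,p,q)}(X,Y^\infty)$ and apply the factorization theorem to write $J_YT=R_0B_wS$ with $S\in\Lin(X,\ell^{q'})$, $R_0\in\Lin(\ell^p,Y^\infty)$ and $\norm{R_0}\norm{B_w}\norm{S}\le(1+\varepsilon)\norm{J_YT}_{(w,p,q)}$. Set $F:=\overline{B_wS(X)}$, a closed subspace of $\ell^p$. The decisive observation is that $R_0(B_wSx)=J_YTx\in J_Y(Y)$ for every $x\in X$; since $J_Y$ is an isometric embedding of the complete space $Y$, its range $J_Y(Y)$ is closed in $Y^\infty$, so by continuity $R_0(F)\subseteq J_Y(Y)$. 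Hence $R:=J_Y^{-1}(R_0|_F)$ defines a bounded operator $F\to Y$ with $\norm{R}\le\norm{R_0}$, and because $B_wS$ takes values in $F$ we obtain $RB_wS=J_Y^{-1}R_0B_wS=J_Y^{-1}J_YT=T$, together with $\norm{R}\norm{B_w}\norm{S}\le(1+\varepsilon)\norm{T}^{inj}_{(w,p,q)}$.

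For (2)$\Rightarrow$(1), suppose $T=RB_wS$ with $F\subseteq\ell^p$ closed, $R\in\Lin(F,Y)$ and $S\in\Lin(X,\ell^{q'})$. Composing with the embedding gives $J_YR\colon F\to Y^\infty$, and since $Y^\infty$ is an $\ell_\infty$-space it is $1$-injective; extending coordinatewise by Hahn--Banach yields $\tilde R\in\Lin(\ell^p,Y^\infty)$ with $\tilde R|_F=J_YR$ and $\norm{\tilde R}=\norm{J_YR}=\norm{R}$. As $B_wS$ maps $X$ into $F$, we get $\tilde RB_wS=J_YRB_wS=J_YT$, so the factorization theorem gives $J_YT\in\BN_{(w,p,q)}(X,Y^\infty)$, i.e. $T\in\BN_{(w,p,q)}^{inj}(X,Y)$, with $\norm{T}^{inj}_{(w,p,q)}=\norm{J_YT}_{(w,p,q)}\le\norm{\tilde R}\norm{B_w}\norm{S}=\norm{R}\norm{B_w}\norm{S}$. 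Taking infima in both implications yields the norm identity (\ref{eq_giotto}).

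The step I expect to require the most care is the inclusion $R_0(F)\subseteq J_Y(Y)$ in (1)$\Rightarrow$(2): one knows a priori only that $R_0$ sends the range of $B_wS$ into $J_Y(Y)$, and promoting this to the closure $F$ relies on $J_Y(Y)$ being closed, which is exactly where the isometric (hence closed-range) nature of $J_Y$ enters. The companion point in (2)$\Rightarrow$(1) --- the norm-preserving extension of $J_YR$ off the subspace $F$ --- is the standard $1$-injectivity of $\ell_\infty(U_{Y^*})$, invoked here so that the factorization norm does not deteriorate and the two inequalities combine to give (\ref{eq_giotto}).
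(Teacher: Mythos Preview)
Your proof is correct and follows essentially the same route as the paper: in both directions you use the identity $\norm{T}^{inj}_{(w,p,q)}=\norm{J_YT}_{(w,p,q)}$, the factorization theorem for $\BN_{(w,p,q)}$, and the $1$-injectivity of $Y^\infty$. Your treatment of (1)$\Rightarrow$(2) is in fact a shade more explicit than the paper's, since you justify the definition $R:=J_Y^{-1}(R_0|_F)$ by first proving $R_0(F)\subseteq J_Y(Y)$ via the closedness of $J_Y(Y)$, whereas the paper simply defines $H$ on $B_wS(X)$ and extends by continuity; the two arguments are equivalent.
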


\begin{proof}
 Let $T\in \BN_{(w, p, q)}^{inj}(X, Y)$, then $J_YT\in \BN_{(w, p, q)}(X, Y)$. Let $\varepsilon >0$, consider $S\in \Lin (X, \ell^{q'})$ and  $R\in \Lin (\ell^p, Y^\infty)$ such that $J_YT=RB_wS$ with 
\[
\norm{R}\norm{B_w}\norm{S}\le \norm{J_YT}_{(w, p, q)}+\varepsilon = \norm{T}^{inj}_{(w, p, q)}+\varepsilon.
\]

Define $H|_{B_wS(X)}:=J_Y^{-1}R|_{B_wS(X)}$. Let $F$ be the closure of $B_wS(X)$, and denote by $\bar{H}$ the continuous extension of $H$ to $F$. Note that $T=\bar{H}B_wS$ and
\begin{equation}
\label{eq_morandi}
\norm{\bar{H}}\norm{B_wS}\le \norm{H}\norm{B_w}\norm{S}=\norm{R}\norm{B_w}\norm{S}\le \norm{T}^{inj}_{(w, p, q)}+\varepsilon.
\end{equation}

 Conversely, let $T\in \Lin(X, Y)$ such that $T=RB_wS$ where $S\in \Lin(X, \ell^{q'})$, $R\in \Lin(F, Y)$ and $F$ is a closed subspace of $\ell^p$. By the extension property of $Y^\infty$ there exists $R^\infty\in \Lin(\ell^p, Y^\infty)$ such that $R^\infty x=J_YRx$, for any $x\in F$ and $\norm{R^\infty}=\norm{J_YR}$. Thus, $J_YT=R^\infty B_wS$. Then, $J_YT\in \BN_{(w, p, q)}(X, Y^\infty)$ and $T\in \BN_{(w, p, q)}^{inj}(X, Y)$. Moreover,
\begin{equation}
\label{eq_mantegna}
\norm{T}^{inj}_{(w, p, q)}=\norm{J_YT}_{(w, p, q)}\le \norm{R^\infty}\norm{B_w}\norm{S}=\norm{R}\norm{B_w}\norm{S}.
\end{equation}
Putting together (\ref{eq_morandi}) and (\ref{eq_mantegna}) we obtain (\ref{eq_giotto}).
\end{proof}

\begin{theorem}
\label{theorem_havana}
Let $T\in \Lin(X, Y)$ and $B_w\in \Lin(\ell^{q'}, \ell^p)$ be a unilateral backward weighted shift. The following are equivalent:

(1) $T\in \BN_{(w, p, q)}^{inj}(X, Y)$,

(2) there exists $(x_n)\in \ell^{w^*}_{q'}(X^*)$ such that 
\[
\norm{Tx}\le \norm{B_w(x_n(x))_n}_p, \quad \forall x\in X.
\]
In addition, $\norm{T}^{inj}_{(w, p, q)}=\inf \norm{w}_\infty \norm{(x_n)_n}^{w*}_{q'}$, where the infimum is taken over all sequences $(x_n)_n$ satisfying condition (2).
\end{theorem}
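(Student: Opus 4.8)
The plan is to deduce Theorem \ref{theorem_havana} from the factorization already obtained in Theorem \ref{theorem_istanbul}, using the canonical identification between bounded operators $S\in\Lin(X,\ell^{q'})$ and weak* $q'$-summable sequences. Recall that every such $S$ has the form $Sx=(x_n(x))_n$ for a unique $(x_n)\in\ell^{w^*}_{q'}(X^*)$, and that $\norm{S}=\norm{(x_n)}^{w^*}_{q'}$ (this is the fact already invoked in the proof of the factorization theorem). I would also record at the outset the norm identity $\norm{B_w}=\norm{w}_\infty$ for $B_w\in\Lin(\ell^{q'},\ell^p)$: testing on the basis vectors $e_n$ gives $\norm{B_w}\ge\norm{w}_\infty$, while the standing hypothesis $1/p+1/q\le 1$ forces $q'\le p$, so that $\ell^{q'}$ embeds contractively into $\ell^p$ and the reverse estimate $\norm{B_w}\le\norm{w}_\infty$ follows. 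This identity is what makes the two expressions for the norm coincide.

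For (1)$\Rightarrow$(2): given $T\in\BN_{(w, p, q)}^{inj}(X,Y)$, fix $\varepsilon>0$ and apply Theorem \ref{theorem_istanbul} to obtain a factorization $T=RB_wS$ with $S\in\Lin(X,\ell^{q'})$, $R\in\Lin(F,Y)$, $F\subseteq\ell^p$ closed, and $\norm{R}\norm{B_w}\norm{S}\le\norm{T}^{inj}_{(w, p, q)}+\varepsilon$. Writing $S$ through its sequence $(x_n)$ gives $\norm{Tx}=\norm{RB_w(x_n(x))_n}\le\norm{R}\norm{B_w(x_n(x))_n}_p$ for all $x$. To remove the factor $\norm{R}$ I would rescale: set $\tilde x_n:=\norm{R}x_n$. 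By linearity of $B_w$ this turns the estimate into $\norm{Tx}\le\norm{B_w(\tilde x_n(x))_n}_p$, so $(\tilde x_n)$ satisfies condition (2); moreover $\norm{w}_\infty\norm{(\tilde x_n)}^{w^*}_{q'}=\norm{w}_\infty\norm{R}\norm{S}=\norm{R}\norm{B_w}\norm{S}\le\norm{T}^{inj}_{(w, p, q)}+\varepsilon$, which yields the bound $\inf\norm{w}_\infty\norm{(x_n)}^{w^*}_{q'}\le\norm{T}^{inj}_{(w, p, q)}$.

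For (2)$\Rightarrow$(1): given $(x_n)\in\ell^{w^*}_{q'}(X^*)$ with $\norm{Tx}\le\norm{B_w(x_n(x))_n}_p$, define $S\in\Lin(X,\ell^{q'})$ by $Sx:=(x_n(x))_n$, so the hypothesis reads $\norm{Tx}\le\norm{B_wSx}_p$. The crux is to manufacture the operator $R$: I would define $R_0$ on $B_wS(X)\subseteq\ell^p$ by $R_0(B_wSx):=Tx$. The displayed inequality is exactly what guarantees both that $R_0$ is well defined (if $B_wSx=B_wSx'$ then $\norm{T(x-x')}\le\norm{B_wS(x-x')}_p=0$) and that $\norm{R_0}\le 1$. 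Extending $R_0$ by continuity to the closure $F:=\overline{B_wS(X)}$ yields $R\in\Lin(F,Y)$ with $\norm{R}\le 1$ and $T=RB_wS$; Theorem \ref{theorem_istanbul} then gives $T\in\BN_{(w, p, q)}^{inj}(X,Y)$ together with $\norm{T}^{inj}_{(w, p, q)}\le\norm{R}\norm{B_w}\norm{S}\le\norm{B_w}\norm{(x_n)}^{w^*}_{q'}=\norm{w}_\infty\norm{(x_n)}^{w^*}_{q'}$. Taking the infimum over all admissible sequences and combining with the reverse bound from the first direction yields the claimed norm equality.

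I expect the main obstacle to be the bookkeeping of the norm constants across the two directions, in particular recognizing that the factor $\norm{R}$ can be pushed into the functionals by rescaling (forward direction) and that the hypothesis inequality does double duty in the backward direction, simultaneously giving well-definedness of $R_0$ and the contraction bound $\norm{R_0}\le 1$. The identity $\norm{B_w}=\norm{w}_\infty$, which hinges on the standing assumption $q'\le p$, is the remaining ingredient tying the two norm formulas together.
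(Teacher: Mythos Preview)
Your proof is correct. The backward direction $(2)\Rightarrow(1)$ is essentially identical to the paper's. In the forward direction $(1)\Rightarrow(2)$ you take a slightly different route: you invoke the factorization $T=RB_wS$ from Theorem~\ref{theorem_istanbul} and then absorb the factor $\norm{R}$ by rescaling the sequence, whereas the paper goes back to a $(w,p,q)$-backward nuclear representation $J_YT=\sum_{i\ge 2} w_i a_i\otimes y_{i-1}$ and applies H\"older's inequality directly to produce the sequence $x_n:=\norm{(y_{i-1})}^w_{p'}\, a_n$. Your version is arguably tidier, since it makes Theorems~\ref{theorem_istanbul} and~\ref{theorem_havana} manifestly equivalent via the standard correspondence $S\leftrightarrow (x_n)$; the paper's version avoids the dependence on Theorem~\ref{theorem_istanbul} in that direction. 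Your explicit verification that $\norm{B_w}=\norm{w}_\infty$ (using $q'\le p$) is a point the paper leaves implicit but does rely on in the final norm estimate.
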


\begin{proof}
Let $T\in \BN_{(w, p, q)}^{inj}(X, Y)$ and $\varepsilon >0$. Consider a $(w, p, q)$-backward nuclear representation of $J_YT$ as $J_YT=\sum_{i=2}^\infty w_ia_i\otimes y_{i-1}$ such that 
\[
\norm{w}_\infty\norm{(a_i)}^{w^*}_{q'}\norm{(y_i)}^w_{p'}\le \norm{J_YT}_{(w, p, q)}+\varepsilon.
\]
Then, 
\begin{align*}
\norm{Tx}&\le \norm{J_YTx}= \sup_{\norm{y^*}_{(Y^\infty)^*}\le 1}\abs{y^*(J_YTx)}=\sup_{\norm{y^*}_{(Y^\infty)^*}\le 1}\abs{\sum_{i=2}^\infty w_ia_i(x)\cdot y^*(y_{i-1})}\\
&\le \norm{(y_{i-1})}^w_{p'}\cdot (\sum_{i=2}^\infty \abs{w_ia_i(x)^p})^{1/p}=\norm{B_wx_n(x)}_p,
\end{align*}
where $x_n=\norm{(y_{i-1})}^w_{p'}\cdot a_n\in \ell^{w^*}_{q'}(X^*)$. Moreover, $\norm{w}_\infty\norm{(x_n)}^{w^*}_{q'}\le \norm{T}^{inj}_{(w, p, q)}+\varepsilon$.

Conversely, suppose that condition (2) holds. Define $S\in \Lin (X, \ell^{q'})$ such that $Sx=(x_n(x))_n$, for all $x\in X$. Then, $\norm{S}=\norm{(x_n)}^{w^*}_{q'}$. By hypothesis, $\norm{Tx}\le \norm{B_wSx}_p$, for all $x\in X$.

Thus, $R:B_wS(X)\to Y$ defined as $R(B_wSx)=Tx$ is a bounded and linear operator with $\norm{R}\le 1$. Let $\bar{R}$ be the continuous linear extension of $R$ to $F$, where $F$ denotes the closure of $B_wS(X)$. Then, $\norm{\bar{R}}\le 1$. Note that $T=\bar{R}B_wS$ where $\bar{R}\in \Lin(F, Y)$. By Theorem \ref{theorem_istanbul} we have $T\in \BN^{inj}_{(w, p, q)}$ and 
\[
\norm{T}^{inj}_{(w, p, q)}\le \norm{B_wS}\norm{\bar{R}}\le \norm{w}_\infty\norm{S}=\norm{w}_\infty \norm{(x_n)}^{w^*}_{q'}.
\]
Hence,
\[
\norm{T}^{inj}_{(w, p, q)}=\inf\{\norm{w}_\infty \norm{(x_n)}^{w^*}_{q'}: \norm{Tx}\le \norm{B_w (x_n(x))_n}_p, x\in X.\}
\]
\end{proof}

\subsection {The surjective hull of the operator ideal $(\BN_{(w, p, q)}, \norm{\cdot}_{(w, p, q)})$}
 Let us turn our attention to the surjective hull of  the operator ideal  $(\BN_{(w, p, q)}, \norm{\cdot}_{(w, p, q)})$.

 Let $X$ be a Banach space, denote $X^1:=\ell_1(U_{X})$ and the surjection $Q^1_X: X^1\to X$ defined as $Q^1_X (\epsilon_x):=\sum_{x\in U_X}\epsilon_x x$, for $(\epsilon_x)\in \ell_1(U_X)$.

\begin{theorem}
\label{theorem_izmir}
Let $T\in \Lin(X, Y)$ and $B_w\in \Lin(\ell^{q'}, \ell^p)$ be a unilateral backward weighted shift. The following are equivalent:

(1) $T\in \BN_{(w, p, q)}^{sur}(X, Y)$,

(2) there exist a quotient space $Z$ of $\ell^{q'}$, $S\in \Lin(X, Z)$ and $Q\in \Lin(Z, Y)$ such that $T=QS$. Moreover, $RB_w=Q\pi$ with $\pi:\ell^{q'}\to Z$ the quotient operator and $R\in \Lin (\ell^p, Y)$.

In addition, $\norm{T}^{sur}_{(w, p, q)}=\inf \norm{R}\norm{B_w}\norm{S}$, where the infimum is taken over all possible factorizations, as described above.
\end{theorem}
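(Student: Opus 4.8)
The plan is to run Pietsch's surjective-hull machinery \emph{dual} to the proof of Theorem \ref{theorem_istanbul}: the metric surjection $Q^1_X\colon X^1\to X$ will play the role that the metric injection $J_Y\colon Y\to Y^\infty$ played there, and the metric lifting (projectivity) property of $X^1=\ell_1(U_X)$ will replace the extension property of $Y^\infty=\ell_\infty(U_{Y^*})$. Throughout I would use the definition of the surjective hull in the form $T\in\BN_{(w,p,q)}^{sur}(X,Y)$ iff $TQ^1_X\in\BN_{(w,p,q)}(X^1,Y)$, with $\norm{T}^{sur}_{(w,p,q)}=\norm{TQ^1_X}_{(w,p,q)}$, together with the factorization theorem proved above, which identifies $V\in\BN_{(w,p,q)}$ with factorizations $V=RB_wS$ and gives the matching equality of norms.

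For $(1)\Rightarrow(2)$ I would start from $TQ^1_X\in\BN_{(w,p,q)}(X^1,Y)$, fix $\varepsilon>0$, and apply the factorization theorem to write $TQ^1_X=RB_wS_0$ with $S_0\in\Lin(X^1,\ell^{q'})$, $R\in\Lin(\ell^p,Y)$ and $\norm{R}\,\norm{B_w}\,\norm{S_0}\le\norm{T}^{sur}_{(w,p,q)}+\varepsilon$. The crucial choice will be $N:=\overline{S_0(K)}$, where $K:=\ker Q^1_X$. The point is that for $k\in K$ one has $RB_wS_0k=TQ^1_Xk=0$, whence $S_0(K)\subseteq\ker(RB_w)$ and, $\ker(RB_w)$ being closed, $N\subseteq\ker(RB_w)$. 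I would then set $Z:=\ell^{q'}/N$ with quotient map $\pi$, so that $RB_w$ descends to a well-defined $Q\in\Lin(Z,Y)$ with $RB_w=Q\pi$ and $\norm{Q}\le\norm{R}\,\norm{B_w}$. Since $S_0(K)\subseteq N=\ker\pi$, the operator $\pi S_0$ vanishes on $K=\ker Q^1_X$, so it descends along the isometric identification $X\cong X^1/\ker Q^1_X$ induced by the metric surjection $Q^1_X$ to an $S\in\Lin(X,Z)$ with $SQ^1_X=\pi S_0$ and $\norm{S}=\norm{\pi S_0}\le\norm{S_0}$. Finally $QSQ^1_X=Q\pi S_0=RB_wS_0=TQ^1_X$, and surjectivity of $Q^1_X$ forces $QS=T$; this yields exactly (2) with $\norm{R}\,\norm{B_w}\,\norm{S}\le\norm{T}^{sur}_{(w,p,q)}+\varepsilon$, hence $\inf\norm{R}\,\norm{B_w}\,\norm{S}\le\norm{T}^{sur}_{(w,p,q)}$.

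For the converse $(2)\Rightarrow(1)$ I would take the given data $T=QS$, $Z=\ell^{q'}/N$ with quotient map $\pi$, and $RB_w=Q\pi$, and consider $SQ^1_X\colon X^1\to Z$. Because $X^1=\ell_1(U_X)$ has the metric lifting property and $\pi$ is a metric surjection, for each $\varepsilon>0$ there is $\tilde S\in\Lin(X^1,\ell^{q'})$ with $\pi\tilde S=SQ^1_X$ and $\norm{\tilde S}\le(1+\varepsilon)\norm{SQ^1_X}\le(1+\varepsilon)\norm{S}$. Then
\[
TQ^1_X=QSQ^1_X=Q\pi\tilde S=RB_w\tilde S,
\]
so the factorization theorem gives $TQ^1_X\in\BN_{(w,p,q)}(X^1,Y)$ with $\norm{TQ^1_X}_{(w,p,q)}\le(1+\varepsilon)\norm{R}\,\norm{B_w}\,\norm{S}$. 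Thus $T\in\BN_{(w,p,q)}^{sur}(X,Y)$, and letting $\varepsilon\to0$ and taking the infimum gives $\norm{T}^{sur}_{(w,p,q)}\le\inf\norm{R}\,\norm{B_w}\,\norm{S}$; combined with the previous paragraph this is the asserted equality.

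The main obstacle I anticipate is the forward direction. In the injective case one merely restricts the target operator to the closed subspace $F\subseteq\ell^p$; here, by contrast, one must manufacture the quotient $Z$ so that $N$ is simultaneously small enough to sit inside $\ker(RB_w)$ (so $Q$ is well defined) and large enough to contain $S_0(\ker Q^1_X)$ (so $\pi S_0$ descends to an operator on $X$). The choice $N=\overline{S_0(\ker Q^1_X)}$ meets both demands, and it is precisely the identity $RB_wS_0=TQ^1_X$ that reconciles them. Once the metric lifting property of $\ell_1(U_X)$ is invoked, the converse and the norm bookkeeping are routine.
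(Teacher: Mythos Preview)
Your proof is correct and follows the same overall strategy as the paper: both use the metric surjection $Q^1_X$ and the lifting property of $X^1=\ell_1(U_X)$ to dualize the injective-hull argument, and the converse direction $(2)\Rightarrow(1)$ is essentially identical.

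The one genuine difference lies in the choice of the subspace $N\subseteq\ell^{q'}$ in the forward direction. The paper takes the \emph{maximal} candidate $N=\ker(RB_w)=(RB_w)^{-1}(0)$; this makes the induced map $Q\colon Z\to Y$ injective, and $S$ is then defined directly as $S:=Q^{-1}T$ (which makes sense because $T(X)=RB_wP(X^1)\subseteq Q(Z)$), with the norm estimate obtained from $SQ^1_X=Q^{-1}RB_wP=\pi P$. You instead take the \emph{minimal} candidate $N=\overline{S_0(\ker Q^1_X)}$; this forces you to verify the inclusion $N\subseteq\ker(RB_w)$ so that $Q$ is well defined, but then $S$ arises naturally by descent of $\pi S_0$ along the isometric identification $X\cong X^1/\ker Q^1_X$. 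Your route is slightly more conceptual (it makes explicit why both conditions on $N$ are compatible, via the single identity $RB_wS_0=TQ^1_X$), while the paper's route is a bit shorter because injectivity of $Q$ lets one write $S=Q^{-1}T$ without invoking descent. Either way the norm bookkeeping yields the same bound $\norm{S}\le\norm{S_0}$.
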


\begin{proof}
 Let $T\in \BN_{(w, p, q)}^{sur}(X, Y)$, then $TQ^1_X\in \BN_{(w, p, q)}(X^1, Y)$. Let $\varepsilon >0$ and consider $P\in \Lin (X^1, \ell^{q'})$, $R\in \Lin (\ell^p, Y)$ with $TQ^1_X=RB_wP$ and 
\[
\norm{R}\norm{B_w}\norm{P}\le \norm{TQ^1_X}+\varepsilon=\norm{T}^{sur}_{(w, p, q)}+\varepsilon.
\]

Set $Z:=\ell^{q'}/(RB_w)^{-1}(0)$ and define the one-to-one operator $Q:Z\to Y$ as $Q((\epsilon_n)+(RB_w)^{-1}(0))=RB_w(\epsilon_n)_n$, for any $(\epsilon_n)\in \ell^{q'}$. Note that, 
\[
\norm{Q((\epsilon_n)+(RB_w)^{-1}(0))}=\norm{RB_w(\epsilon_n)_n}\le \norm{RB_w}\norm{(\epsilon_n)_n+R^{-1}(0)}, \quad \forall (\epsilon_n)_n\in \ell^p.
\]
Hence, $\norm{Q}\le \norm{RB_w}$. Define, $S\in \Lin (X, Z)$ as $S=Q^{-1}T$. Note that $Q^{-1}TQ^1_X=Q^{-1}RB_wP$. Then, 
\[
\norm{S}=\norm{SQ^1_X}\le \norm{Q^{-1}RB_w}\norm{P}\le \norm{P}.
\]
Hence, $T=QS$ and 
\begin{equation}
\label{eq_davinci}
\norm{S}\norm{R}\norm{B_w}\le \norm{P}\norm{R}\norm{B_w}\le \norm{T}^{sur}_{(w, p, q)}+\varepsilon.
\end{equation}

Conversely, let $T+QS$ such that $S\in \Lin (X, Z)$, $Q\in \Lin (Z, Y)$ with $Z$ a quotient space of $\ell^{q'}$. Moreover, let $R\in \Lin (\ell^p, Y)$ such that $RB_w=Q\pi$, where $\pi:\ell^{q'}\to Z$ is the quotient operator. By the lifting property of $X^1$, for each $\varepsilon >0$ there exists $S_0\in \Lin (X^1, \ell^{q'})$ such that $\pi S_0=SQ^1_X$, and 
\[
\norm{S_0}\le \Big(1+\frac{\varepsilon}{\norm{S}\norm{R}\norm{B_w}}\Big)\norm{\pi}\norm{SQ^1_X}\le \norm{S}+\frac{\varepsilon}{\norm{R}\norm{B_w}}.
\]
Note that $TQ^1_X=QSQ^1_X=Q\pi S_0=RB_wS_0$. Then, $TQ^1_X\in \BN_{(w, p, q)}(X^1, Y)$, i.e. $T\in \BN^{sur}_{(w, p, q)}(X, Y)$. Moreover, 
\begin{align*}
\norm{T}^{sur}_{(w, p, q)}&=\norm{TQ^1_X}_{(w, p, q)}\le \norm{R}\norm{B_w}\norm{S_0}\\
&\le \norm{R}\norm{B_w}\Big(\norm{S}+\frac{\varepsilon}{\norm{R}\norm{B_w}}\Big)\\
&\le \norm{R}\norm{B_w}\norm{S}+\varepsilon.
\end{align*}
This, together with (\ref{eq_davinci}) lead to $\norm{T}^{sur}_{(w, p, q)}=\inf \norm{R}\norm{B_w}\norm{S}$, where the infimum is taken over all possible factorizations as in (2).
\end{proof}

We will need the following.

\begin{lemma}[subsection 8.5.4 \cite{PieID}]
\label{lemma_azul}
Let $(\mathcal{U}, \norm{\cdot}_{\mathcal{U}})$ be a normed operator ideal. Let $S_0\in \Lin(E_0, F)$ and $S\in \mathcal{U}(E, F)$. If $S_0(U_{E_0})\subseteq S(U_E)$, then $S_0\in \mathcal{U}^{sur}(E_0, F)$ and $\norm{S_0}^{sur}_{\mathcal{U}}\le \norm{S}_{\mathcal{U}}$.
\end{lemma}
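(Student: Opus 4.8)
The plan is to unwind the definition of the surjective hull in terms of the canonical metric surjection and then reduce the whole statement to a single application of the ideal inequality. I would first recall that, by Pietsch's definition of $\mathcal{U}^{sur}$ (the same one used in the proof of Theorem \ref{theorem_izmir}), membership $S_0\in\mathcal{U}^{sur}(E_0,F)$ is \emph{equivalent} to $S_0 Q^1_{E_0}\in\mathcal{U}(E_0^1,F)$, where $E_0^1=\ell_1(U_{E_0})$ and $Q^1_{E_0}:E_0^1\to E_0$ is the canonical surjection $Q^1_{E_0}((\epsilon_x)):=\sum_{x\in U_{E_0}}\epsilon_x x$, and moreover $\norm{S_0}^{sur}_{\mathcal{U}}=\norm{S_0 Q^1_{E_0}}_{\mathcal{U}}$. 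Thus it suffices to exhibit a factorization $S_0 Q^1_{E_0}=SV$ through $S$ with $V\in\Lin(E_0^1,E)$ and $\norm{V}\le 1$; the ideal inequality for $\mathcal{U}$ then delivers both conclusions at once.

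The construction of $V$ is where the hypothesis $S_0(U_{E_0})\subseteq S(U_E)$ is consumed. For each $x\in U_{E_0}$, this inclusion lets me select $u_x\in U_E$ with $Su_x=S_0 x$, and I would define $V((\epsilon_x)_{x\in U_{E_0}}):=\sum_{x\in U_{E_0}}\epsilon_x u_x$. Since $\norm{u_x}\le 1$ and $(\epsilon_x)\in\ell_1(U_{E_0})$, the series converges absolutely in $E$ with $\norm{V((\epsilon_x))}\le\sum_x\abs{\epsilon_x}\norm{u_x}\le\norm{(\epsilon_x)}_{\ell_1}$, so $V$ is a well-defined linear contraction. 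A direct computation gives $SV((\epsilon_x))=\sum_x\epsilon_x Su_x=\sum_x\epsilon_x S_0 x=S_0 Q^1_{E_0}((\epsilon_x))$, i.e. $S_0 Q^1_{E_0}=SV$. Applying property (3) of the ideal $s$-norm with $S\in\mathcal{U}(E,F)$ yields $S_0 Q^1_{E_0}\in\mathcal{U}(E_0^1,F)$ and $\norm{S_0 Q^1_{E_0}}_{\mathcal{U}}\le\norm{S}_{\mathcal{U}}\norm{V}\le\norm{S}_{\mathcal{U}}$, which by the reduction above is exactly $S_0\in\mathcal{U}^{sur}(E_0,F)$ with $\norm{S_0}^{sur}_{\mathcal{U}}\le\norm{S}_{\mathcal{U}}$.

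I do not expect a genuine obstacle here; the argument is essentially the universal ("freeness") property of $\ell_1(U_{E_0})$, namely that any assignment of norm-$\le 1$ vectors of $E$ to the index set $U_{E_0}$ extends to a contraction $E_0^1\to E$. The only points deserving care are that the correct space $E_0^1=\ell_1(U_{E_0})$ attached to the \emph{domain} $E_0$ is used (so that $\ell_1$-summability forces $\norm{V}\le1$), and that the pointwise selection $x\mapsto u_x$ is a legitimate choice function on $U_{E_0}$. These are precisely the ingredients that upgrade the purely set-theoretic inclusion of images into the operator-level factorization $S_0 Q^1_{E_0}=SV$ through $S$.
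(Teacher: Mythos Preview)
The paper does not supply its own proof of this lemma; it is simply quoted from Pietsch's monograph (subsection 8.5.4 of \cite{PieID}) and then invoked in the proof of Theorem~\ref{theorem_miami}. Your argument is correct and is in fact the standard one: by definition of the surjective hull one must show $S_0 Q^1_{E_0}\in\mathcal{U}(E_0^1,F)$ with the right norm bound, and the hypothesis $S_0(U_{E_0})\subseteq S(U_E)$ together with the universal property of $\ell_1(U_{E_0})$ yields a contraction $V\in\Lin(E_0^1,E)$ with $S_0 Q^1_{E_0}=SV$, after which the ideal inequality finishes the job. There is nothing to add.
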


\begin{theorem}
\label{theorem_miami}
Let $T\in \Lin(X, Y)$ and $B_w\in \Lin(\ell^{q'}, \ell^p)$ be a unilateral backward weighted shift. The following are equivalent:

(1) $T\in \BN_{(w, p, q)}^{sur}(X, Y)$,

(2) there exists $P\in \BN_{(w, p, q)}(\ell^{q'}, Y)$ of the form $Px:=\sum_{i=1}^\infty x_{i+1}w_{i+1}y_i$ with $(y_i)\in \ell^w_q(Y)$ satisfying 
$$
T(U_X)\subseteq \left\{Px: (x_n)\in U_{\ell^{q'} }\right\}.
$$
Moreover, 
\[
\norm{T}^{sur}_{(w, p, q)}=\inf \big\{\norm{P}_{(w, p, q)}: T(U_X)\subseteq P(U_{\ell^{q'}}), P\in \BN_{(w, p, q)}(\ell^{q'}, Y)\big\}.
\]
\end{theorem}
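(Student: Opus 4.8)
The plan is to prove the equivalence by establishing the two implications separately, letting the norm identity fall out of the quantitative bounds obtained in each direction. The implication (2)$\Rightarrow$(1), together with the inequality $\norm{T}^{sur}_{(w,p,q)}\le\inf\{\ldots\}$, is the soft direction and should follow immediately from the surjective-hull criterion of Lemma \ref{lemma_azul}; the implication (1)$\Rightarrow$(2), together with the reverse inequality, is the substantive direction, and I would route it through the surjective factorization already secured in Theorem \ref{theorem_izmir}.

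For (2)$\Rightarrow$(1) I would argue as follows. Given $P\in\BN_{(w,p,q)}(\ell^{q'},Y)$ with $T(U_X)\subseteq P(U_{\ell^{q'}})$, apply Lemma \ref{lemma_azul} with $E_0=X$, $E=\ell^{q'}$, $F=Y$, $S_0=T$ and $S=P$. This yields at once $T\in\BN_{(w,p,q)}^{sur}(X,Y)$ and $\norm{T}^{sur}_{(w,p,q)}\le\norm{P}_{(w,p,q)}$; taking the infimum over all admissible $P$ gives $\norm{T}^{sur}_{(w,p,q)}\le\inf\{\norm{P}_{(w,p,q)}:T(U_X)\subseteq P(U_{\ell^{q'}})\}$.

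For (1)$\Rightarrow$(2) I would fix $\varepsilon>0$ and invoke Theorem \ref{theorem_izmir} to produce a quotient map $\pi:\ell^{q'}\to Z$, operators $S\in\Lin(X,Z)$ and $Q\in\Lin(Z,Y)$ with $T=QS$, and $R\in\Lin(\ell^p,Y)$ with $RB_w=Q\pi$ and $\norm{R}\norm{B_w}\norm{S}\le\norm{T}^{sur}_{(w,p,q)}+\varepsilon$. I would then set $P:=\norm{S}\,RB_w$, which lies in $\BN_{(w,p,q)}(\ell^{q'},Y)$ by the ideal property together with the bound $\norm{B_w}_{(w,p,q)}\le\norm{B_w}$ coming from the representation $B_w=\sum_{i\ge2}w_ie_i^*\otimes e_{i-1}$, and which satisfies $\norm{P}_{(w,p,q)}=\norm{S}\norm{RB_w}_{(w,p,q)}\le\norm{S}\norm{R}\norm{B_w}\le\norm{T}^{sur}_{(w,p,q)}+\varepsilon$. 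Evaluating $B_w$ on the canonical basis gives $B_w x=\sum_i x_{i+1}w_{i+1}e_i$, so with $y_i:=\norm{S}\,Re_i$ one obtains $Px=\sum_i x_{i+1}w_{i+1}y_i$, exactly the form required in (2); since $R\in\Lin(\ell^p,Y)$ one has $(y_i)\in\ell^w_{p'}(Y)$, and because $p'\le q$ (a consequence of $1/p+1/q\le1$) the inclusion $\ell^w_{p'}(Y)\subseteq\ell^w_q(Y)$ forces $(y_i)\in\ell^w_q(Y)$. For the set inclusion, given $x\in U_X$ the estimate $\norm{Sx}_Z<\norm{S}$ combined with the metric-surjection property $\pi(U_{\ell^{q'}})=U_Z$ furnishes $\xi\in\ell^{q'}$ with $\pi\xi=Sx$ and $\norm{\xi}<\norm{S}$; writing $\eta:=\xi/\norm{S}\in U_{\ell^{q'}}$ and using $RB_w=Q\pi$ gives $Tx=QSx=Q\pi\xi=RB_w\xi=\norm{S}\,RB_w\eta=P\eta\in P(U_{\ell^{q'}})$, so $T(U_X)\subseteq P(U_{\ell^{q'}})$. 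This establishes (2) and, on letting $\varepsilon\to0$, the inequality $\inf\{\ldots\}\le\norm{T}^{sur}_{(w,p,q)}$, which together with the previous paragraph yields the claimed norm identity.

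The main obstacle I anticipate lies entirely in the (1)$\Rightarrow$(2) step, where two points demand care. First, one must verify that the operator extracted from Theorem \ref{theorem_izmir} genuinely carries the displayed coefficient form $\sum_i x_{i+1}w_{i+1}y_i$ with a weakly $q$-summable tail; this rests on identifying $R$ with the sequence $(Re_i)\in\ell^w_{p'}(Y)$ and on the inclusion $\ell^w_{p'}(Y)\subseteq\ell^w_q(Y)$, which is precisely where the standing hypothesis $1/p+1/q\le1$ enters. Second, the rescaling by $\norm{S}$ must be tracked simultaneously in the inclusion (where it is absorbed through the metric-surjection property of $\pi$) and in the norm estimate (where it must remain below $\norm{T}^{sur}_{(w,p,q)}+\varepsilon$). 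The degenerate case $S=0$, i.e.\ $T=0$, should be disposed of separately by taking $P=0$, but is immediate.
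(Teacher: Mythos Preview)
Your proof is correct and follows essentially the same route as the paper: Lemma~\ref{lemma_azul} for $(2)\Rightarrow(1)$, and Theorem~\ref{theorem_izmir} to build $P$ as a scalar multiple of $RB_w$ for $(1)\Rightarrow(2)$. The only noteworthy difference is in how the inclusion $T(U_X)\subseteq P(U_{\ell^{q'}})$ is verified: the paper lifts $SQ_X^1$ through $\pi$ via the projectivity of $X^1$ (producing an auxiliary $S_0$ and an extra $(1+\delta)$ factor), whereas you lift $Sx$ directly through the quotient map $\pi$ using its metric-surjection property---your route is marginally cleaner and avoids the additional parameter, and you also make explicit the verification that $(y_i)\in\ell^w_q(Y)$ via $p'\le q$, which the paper leaves implicit.
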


\begin{proof}
Let $T\in \BN_{(w, p, q)}^{sur}(X, Y)$ and $\varepsilon >0$. By Theorem \ref{theorem_izmir}, there exists a quotient space $Z$ of $\ell^{q'}$, $S\in \Lin (X, Z)$, $Q\in \Lin (Z, Y)$ and $R\in \Lin (\ell^p, Y)$ such that 
\[
T=QS, \quad Q\pi=RB_w, \quad \norm{R}\norm{B_w}\norm{S}\le \norm{T}^{sur}_{(w, p, q)}+\varepsilon.
\]
 Let $\delta >0$, by the lifting property of $X^1$, there exists $S_0\in \Lin (X^1, \ell^{q'})$ such that $\pi S_0=SQ^1_X$, and
\[
\norm{S_0}\le (1+\delta)\norm{\pi}\norm{SQ^1_X}\le (1+\delta)\norm{S}.
\]
 Set $P:=\norm{S_0}Q\pi \in \Lin (\ell^{q'}, Y)$. Observe that, $P=RB_w(\norm{S_0}\cdot I_{\ell^{q'}})$, where $I_{\ell^{q'}}$ denotes the identity on $\ell^{q'}$. Hence, $P\in \BN_{(w, p, q)}(\ell^{q'}, Y)$ with $\norm{P}_{(w, p, q)}\le \norm{P}\norm{B_w}\norm{S_0}$.
 Since $Q^1_X$ is surjective, for any $x\in U_X$ there exists $x_1\in X^1$ such that $Q^1_Xx_1=x$. Then, 
\[
Tx=TQ^1_Xx_1=QSQ^1_X x_1=Q\pi S_0x_1=P\Big(\frac{1}{\norm{S_0}}\cdot S_0x_1\Big).
\]
In other words, $T(U_X)\subseteq P(U_{\ell^{q'}})$. Finally, 
\[
\norm{P}_{(w, p, q)}\le (1+\delta)\norm{S}\norm{R}\norm{B_w}\le (1+\delta)(\norm{T}^{sur}_{(w, p, q)}+\varepsilon).
\]
 Conversely, suppose the condition (2) holds, then we conclude by Lemma \ref{lemma_azul} that $T\in \BN^{sur}_{(w, p, q)}(X, Y)$ with $\norm{T}^{sur}_{(w, p, q)}\le \norm{P}_{(w, p, q)}$.
\end{proof}

It is worth noticing here the regularity of $\BN^{sur}_{(w, p, q)}$. This result can be proven by following the same argument given by Pietsch in Lemma 3 \cite{Pie}, where he showed the regularity of the surjective hull of the ideal of $(r, p, q)$-nuclear operators. So, we have the following:

\begin{proposition}
\label{morning}
 The ideal $\BN^{sur}_{(w, p, q)}$ is regular.
\end{proposition}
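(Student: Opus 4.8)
The plan is to check regularity straight from its definition, i.e. to prove $\big(\BN^{sur}_{(w,p,q)}\big)^{reg}=\BN^{sur}_{(w,p,q)}$ isometrically. Let $K_Y\colon Y\to Y^{**}$ be the canonical isometric embedding, so that regularity amounts to the implication: if $K_YT\in \BN^{sur}_{(w,p,q)}(X,Y^{**})$ then $T\in \BN^{sur}_{(w,p,q)}(X,Y)$ with $\norm{T}^{sur}_{(w,p,q)}=\norm{K_YT}^{sur}_{(w,p,q)}$. Since $K_Y\circ T$ always gives $\norm{K_YT}^{sur}_{(w,p,q)}\le \norm{T}^{sur}_{(w,p,q)}$ by the ideal property (as $\norm{K_Y}=1$), and the inclusion $\BN^{sur}_{(w,p,q)}\subseteq\big(\BN^{sur}_{(w,p,q)}\big)^{reg}$ is automatic, all the content lies in the reverse inclusion together with the estimate $\norm{T}^{sur}_{(w,p,q)}\le\norm{K_YT}^{sur}_{(w,p,q)}$, which is what I would establish.

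Assume therefore $K_YT\in \BN^{sur}_{(w,p,q)}(X,Y^{**})$ and fix $\varepsilon>0$. By Theorem \ref{theorem_miami} there is $P\in \BN_{(w,p,q)}(\ell^{q'},Y^{**})$ with $(K_YT)(U_X)\subseteq P(U_{\ell^{q'}})$ and $\norm{P}_{(w,p,q)}\le \norm{K_YT}^{sur}_{(w,p,q)}+\varepsilon$. The geometric device is to keep only the part of the range of $P$ that already lies in $Y$: because $K_Y$ is isometric and $Y$ is complete, $K_Y(Y)$ is closed in $Y^{**}$, so $Z_0:=P^{-1}\!\big(K_Y(Y)\big)$ is a closed subspace of $\ell^{q'}$ and $P_0:=K_Y^{-1}P|_{Z_0}\colon Z_0\to Y$ is a well-defined bounded operator. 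From $(K_YT)(U_X)\subseteq K_Y(Y)$ one gets the set identity $P(U_{\ell^{q'}})\cap K_Y(Y)=P(U_{Z_0})$, whence $K_Y\big(T(U_X)\big)\subseteq P(U_{Z_0})=K_Y\big(P_0(U_{Z_0})\big)$ and, applying the injection $K_Y^{-1}$, $T(U_X)\subseteq P_0(U_{Z_0})$. Thus the target is now an honestly $Y$-valued operator $P_0$ whose ball image captures $T(U_X)$; if I can realize $P_0$ (up to an arbitrarily small enlargement of its ball image) by a genuine element of $\BN_{(w,p,q)}(Z_0,Y)$ of comparable ideal norm, then Lemma \ref{lemma_azul} yields $T\in \BN^{sur}_{(w,p,q)}(X,Y)$ with $\norm{T}^{sur}_{(w,p,q)}\le \norm{K_YT}^{sur}_{(w,p,q)}+2\varepsilon$, and $\varepsilon\to 0$ closes the argument.

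The crux --- and the step I expect to be the main obstacle --- is precisely this last point. A priori $P_0$ is only known to sit in the regular hull of $\BN_{(w,p,q)}$, since $K_YP_0=P|_{Z_0}$ carries the inherited representation $\sum_{i\ge 2}w_i\,(a_i|_{Z_0})\otimes \hat y_{i-1}$ whose vectors $\hat y_{i-1}\in Y^{**}$ need not lie in $Y$; descending these to $Y$ without inflating the product $\norm{w}_\infty\norm{(a_i)}^{w^*}_{q'}\norm{(\hat y_{i})}^w_{p'}$ is exactly the mechanism of Pietsch's Lemma 3 in \cite{Pie}. Here the surjective formulation is what saves the day: I do not need an exact operator identity but only an image-ball containment, which is stable under approximation. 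Concretely, I would use Goldstine's theorem together with the principle of local reflexivity to replace the $\hat y_{i-1}$ by vectors $y_{i-1}\in Y$ lying in slightly larger balls and agreeing with $\hat y_{i-1}$ on the finitely many functionals that matter, producing $\check P:=\sum_{i\ge 2}w_i\,(a_i|_{Z_0})\otimes y_{i-1}\in \BN_{(w,p,q)}(Z_0,Y)$ with $\norm{\check P}_{(w,p,q)}\le \norm{P}_{(w,p,q)}+\varepsilon$ and $T(U_X)\subseteq \check P(U_{Z_0})$; the admissibility of this descent rests on the fact that for a sequence taken in $Y$ the weak $p'$-summing norm computed over $Y^*$ coincides with the one computed over $(Y^{**})^*$, so that $\ell^w_{p'}(Y)$ embeds isometrically into $\ell^w_{p'}(Y^{**})$ and no norm is lost in the passage. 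All the remaining manipulations are the routine ideal bookkeeping already displayed in the proofs of Theorems \ref{theorem_izmir} and \ref{theorem_miami}.
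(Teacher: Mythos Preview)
The paper gives no self-contained proof here; it simply states that Pietsch's argument for the surjective hull of $(r,p,q)$-nuclear operators (Lemma~3 in \cite{Pie}) carries over verbatim. Your proposal ultimately takes the same route: after unpacking the surjective hull via Theorem~\ref{theorem_miami}, restricting to $Z_0=P^{-1}\big(K_Y(Y)\big)$, and recognizing that $P_0=K_Y^{-1}P|_{Z_0}$ lies a~priori only in $\BN^{reg}_{(w,p,q)}(Z_0,Y)$, you explicitly invoke ``the mechanism of Pietsch's Lemma~3'' for the descent from $Y^{**}$ to $Y$. At that level of detail your approach and the paper's coincide.

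Where you go beyond the paper is in sketching that descent via local reflexivity, and this sketch has a genuine gap. You claim that after replacing the $\hat y_{i-1}\in Y^{**}$ by nearby $y_{i-1}\in Y$ one still has $T(U_X)\subseteq \check P(U_{Z_0})$, because image-ball containment is ``stable under approximation'' and only ``finitely many functionals matter.'' Neither assertion is justified: a norm-small perturbation of $P_0$ can move $P_0(U_{Z_0})$ completely off $T(U_X)$, and local reflexivity corrects only finitely many $\hat y_i$ at a time, whereas the tails of a merely weakly $p'$-summable sequence need not be small in $\norm{\cdot}^w_{p'}$ (e.g.\ the unit vectors in $\ell^{p'}$), so no finite-dimensional correction gives uniform control of $\check P$. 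Your preliminary observations---the set identity $P(U_{\ell^{q'}})\cap K_Y(Y)=P(U_{Z_0})$ and the isometric embedding $\ell^w_{p'}(Y)\hookrightarrow \ell^w_{p'}(Y^{**})$---are correct and useful, but the passage from $P_0\in\BN^{reg}_{(w,p,q)}$ to an honest $\check P\in\BN_{(w,p,q)}$ retaining the ball containment is exactly the substantive content you have deferred to Pietsch, and your heuristic does not reproduce it.
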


The following is proved in \cite{Pie}, we will need it later.

\begin{lemma}[Lemma 2 \cite{Pie}]
\label{afternoon}
 Let $\U$ be any operator ideal, then $(\U^{reg})^{sur}=(\U^{sur})^{reg}$.
\end{lemma}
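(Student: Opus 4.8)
The plan is to collapse both sides of the asserted identity to one and the same membership condition by unwinding the Pietsch definitions of the two hulls, after which the equality follows from nothing more than associativity of composition. The key structural observation is that the two hull operations act on opposite sides of an operator. The surjective hull modifies the \emph{domain}: with $X^1=\ell_1(U_X)$ and $Q^1_X:X^1\to X$ the canonical metric surjection, one has $T\in\U^{sur}(X,Y)$ exactly when $TQ^1_X\in\U(X^1,Y)$. The regular hull modifies the \emph{codomain}: writing $K_Y:Y\to Y^{**}$ for the canonical evaluation embedding, one has $T\in\U^{reg}(X,Y)$ exactly when $K_YT\in\U(X,Y^{**})$. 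Since one operation touches only $X$ and the other only $Y$, I expect them to commute at the level of classes.

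First I would unfold the left-hand side. For $T\in\Lin(X,Y)$, membership $T\in(\U^{reg})^{sur}(X,Y)$ means $TQ^1_X\in\U^{reg}(X^1,Y)$, and applying the regular-hull criterion to the operator $TQ^1_X$ (whose codomain is still $Y$) this says $K_Y(TQ^1_X)\in\U(X^1,Y^{**})$. Next I would unfold the right-hand side. Membership $T\in(\U^{sur})^{reg}(X,Y)$ means $K_YT\in\U^{sur}(X,Y^{**})$; because $K_YT$ has the \emph{same} domain $X$ as $T$, its surjective hull is computed with the \emph{same} canonical surjection $Q^1_X:X^1\to X$, so this amounts to $(K_YT)Q^1_X\in\U(X^1,Y^{**})$.

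These two conditions are literally identical: by associativity of composition $K_Y(TQ^1_X)=(K_YT)Q^1_X$ as an operator $X^1\to Y^{**}$. Hence $T\in(\U^{reg})^{sur}$ if and only if $K_YTQ^1_X\in\U$ if and only if $T\in(\U^{sur})^{reg}$, which is the desired equality of classes; the same bookkeeping shows the two ideal quasi-norms coincide, each being $\norm{K_YTQ^1_X}_\U$. The one point that genuinely needs checking --- and the crux of the whole argument --- is that the auxiliary surjection appearing on both sides is the one attached to $X$ and not to $Y^{**}$. This is precisely why passing from $T$ to $K_YT$ is harmless: it changes only the codomain, leaving the domain $X$, and hence the space $X^1=\ell_1(U_X)$ together with the map $Q^1_X$, untouched, so no mismatch of auxiliary spaces can occur.
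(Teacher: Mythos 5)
Your proof is correct and is essentially the argument the paper relies on: the paper does not reprove this lemma but cites Lemma 2 of \cite{Pie}, whose proof is exactly your unfolding of the two hull criteria, $T\in\U^{sur}(X,Y)\Leftrightarrow TQ^1_X\in\U(X^1,Y)$ and $T\in\U^{reg}(X,Y)\Leftrightarrow K_YT\in\U(X,Y^{**})$, after which both sides collapse to the single condition $K_YTQ^1_X\in\U(X^1,Y^{**})$ by associativity, with the matching norm identity $\norm{K_YTQ^1_X}_\U$ on both sides. Your observation that the two hulls act on opposite sides (domain versus codomain), so the auxiliary space $X^1$ and surjection $Q^1_X$ are unaffected by precomposing with $K_Y$, is precisely the point that makes the computation work.
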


\subsection{Connection between the injective and surjective hull of $\BN_{(w, p, q)}$}
 It is well-known that the dual operator ideal of every surjective operator ideal is injective, see Proposition 1, section 4.7 \cite{PieID}.  More precisely, for the operator ideal $\BN_{(w, p, q)}$ we have the following.

\begin{theorem}
\label{theorem_orlando}
$(\BN^{sur}_{(w, p, p')})^{dual}=\BN^{inj}_{(w, p', p)}$.
\end{theorem}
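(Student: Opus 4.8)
The plan is to establish the equality by matching the two explicit characterizations obtained above, linked through adjoints and a polarity argument. Note first that, by the result cited just before the statement (Proposition 1, section 4.7 of \cite{PieID}), the left-hand side $(\BN^{sur}_{(w,p,p')})^{dual}$ is injective, so it is legitimate to compare it directly with the injective ideal $\BN^{inj}_{(w,p',p)}$. Conceptually, the argument is the quantitative form of ``dual of surjective is injective'', i.e.\ the general identity $(\U^{sur})^{dual}=(\U^{dual})^{inj}$; this rests on the observation that the canonical surjection $Q^1_{F^*}\colon \ell_1(U_{F^*})\to F^*$ is exactly the restriction of $J_F^*$ to $\ell_1(U_{F^*})\subseteq (F^\infty)^*$. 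Applying it with $\U=\BN_{(w,p,p')}$ reduces the theorem to computing the dual ideal and identifying its injective hull, and the engine of that computation is the adjoint of a backward nuclear representation: if $T=\sum_{i=2}^\infty w_i a_i\otimes y_{i-1}$, then, using $(a\otimes y)^*=\widehat{y}\otimes a$, one gets $T^*=\sum_{i=2}^\infty w_i \widehat{y_{i-1}}\otimes a_i$. The functional side $(\widehat{y_{i-1}})$ inherits weak$^*$ $p'$-summability in $Y^{**}$ from $(y_i)\in \ell^w_{p'}(Y)$, the two exponents $p$ and $p'$ get exchanged, and $T^*$ acquires exactly the shape of a $(w,p',p)$-backward nuclear operator.

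The forward inclusion $(\BN^{sur}_{(w,p,p')})^{dual}\subseteq \BN^{inj}_{(w,p',p)}$ is the clean half. Given $U$ with $U^*\in\BN^{sur}_{(w,p,p')}(F^*,E^*)$, Theorem \ref{theorem_miami} supplies a generator $P\in\BN_{(w,p,p')}(\ell^p,E^*)$, $P\xi=\sum_i \xi_{i+1}w_{i+1}z_i$ with $(z_i)\in\ell^w_{p'}(E^*)$, such that $U^*(U_{F^*})\subseteq P(U_{\ell^p})$. Taking suprema against a fixed $e\in E$ (which is weak$^*$ continuous, so the containment may be used as is) converts this into $\norm{Ue}=\sup_{f^*\in U_{F^*}}\abs{\langle U^*f^*,e\rangle}\le \sup_{\xi\in U_{\ell^p}}\abs{\sum_i \xi_{i+1}w_{i+1}z_i(e)}=\norm{B_w(z_i(e))_i}_{p'}$, which is precisely the bound of Theorem \ref{theorem_havana} applied to $\BN_{(w,p',p)}$, since $(z_i)\in\ell^w_{p'}(E^*)\subseteq\ell^{w^*}_{p'}(E^*)$. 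Hence $U\in\BN^{inj}_{(w,p',p)}$, with the corresponding norm estimate.

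The delicate point, and the main obstacle, is the reverse inclusion, where the weak$^*$/weak discrepancy becomes visible. Starting from $U\in\BN^{inj}_{(w,p',p)}$, Theorem \ref{theorem_havana} gives $(e_n)\in\ell^{w^*}_{p'}(E^*)$ with $\norm{Ue}\le\norm{B_w(e_n(e))_n}_{p'}$; dualizing the induced factorization produces the candidate generator $P\colon\ell^p\to E^*$, $P\xi=\sum_i \xi_{i+1}w_{i+1}e_{i+1}$, with $U^*(U_{F^*})\subseteq \overline{P(U_{\ell^p})}$. But its vector system $(e_{i+1})$ is only weak$^*$ $p'$-summable, whereas membership in $\BN_{(w,p,p')}(\ell^p,E^*)$, as demanded by Lemma \ref{lemma_azul} and Theorem \ref{theorem_miami}, requires a genuinely weakly $p'$-summable generator; and this gap cannot be closed by embedding the target into its bidual alone. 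This is exactly where regularity enters: by Proposition \ref{morning} the ideal $\BN^{sur}_{(w,p,p')}$ is regular, and by Lemma \ref{afternoon} the regular and surjective hulls commute, $(\BN_{(w,p,p')})^{reg\,sur}=(\BN_{(w,p,p')})^{sur\,reg}=\BN^{sur}_{(w,p,p')}$. The plan is to absorb the regularization forced by duality into the hull operations: identify $(\BN_{(w,p,p')})^{dual}$ with the regular hull of $\BN_{(w,p',p)}$ — the regular hull being the standard device reconciling weak$^*$ and weak summability under duality — so that the weak$^*$ generator $P$ becomes admissible and $U^*\in\BN^{sur}_{(w,p,p')}$ via Lemma \ref{lemma_azul}, while the commutation of hulls guarantees that passing to this regular hull leaves the injective hull unchanged. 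Combining the two inclusions yields the asserted equality of ideals, and tracking the infima in Theorems \ref{theorem_havana} and \ref{theorem_miami} through the polarity computation gives the equality of the associated norms.
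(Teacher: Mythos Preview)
Your argument for the inclusion $(\BN^{sur}_{(w,p,p')})^{dual}\subseteq \BN^{inj}_{(w,p',p)}$ is correct and is exactly what the paper does: apply Theorem~\ref{theorem_miami} to $T^*$, pair against a fixed $x\in X$ via Hahn--Banach, and read off the inequality required by Theorem~\ref{theorem_havana}.

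For the reverse inclusion you diverge from the paper, and in two respects your route is more tortuous than necessary. First, the ``delicate obstacle'' you identify---that the sequence $(e_n)$ produced by Theorem~\ref{theorem_havana} lies only in $\ell^{w^*}_{p'}(E^*)$ rather than $\ell^{w}_{p'}(E^*)$---is largely a phantom: for $1<p'<\infty$ the two spaces coincide, since a weak$^*$ $p'$-summable sequence in $E^*$ defines a bounded map $E\to\ell^{p'}$, whose double adjoint (landing back in $\ell^{p'}$ by reflexivity) shows the sequence is weakly $p'$-summable. So your explicit generator $P$ already belongs to $\BN_{(w,p,p')}(\ell^p,E^*)$ without any regularization, and Lemma~\ref{lemma_azul} applies directly (your formula for $P$ also has an index slip: it should read $P\xi=\sum_i\xi_{i+1}w_{i+1}e_i$, not $e_{i+1}$). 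Second, the abstract tools you reach for---Proposition~\ref{morning} and Lemma~\ref{afternoon}---are the ingredients the paper reserves for Theorem~\ref{theorem_westpalmbeach}, not for this one.

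The paper handles this inclusion in a single line of hull identities:
\[
\BN^{inj}_{(w,p',p)}=\bigl(\BN^{reg}_{(w,p',p)}\bigr)^{inj}=\bigl(\BN^{dual}_{(w,p,p')}\bigr)^{inj}\subseteq\bigl(\BN^{sur}_{(w,p,p')}\bigr)^{dual},
\]
using the general fact $\U^{inj}=(\U^{reg})^{inj}$ (which holds because $Y^\infty$ is an injective Banach space, so $K_{Y^\infty}$ admits a bounded left inverse), the duality $\BN^{reg}_{(w,p',p)}=\BN^{dual}_{(w,p,p')}$ (the analogue of 18.1.6 in \cite{PieID}), and the standard inclusion $(\U^{dual})^{inj}\subseteq(\U^{sur})^{dual}$ from \S4.7 of \cite{PieID} (which you yourself sketch via $Q^1_{F^*}=J_F^*|_{\ell_1(U_{F^*})}$). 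Your plan ultimately invokes the same duality identity $\BN^{dual}_{(w,p,p')}=\BN^{reg}_{(w,p',p)}$, so the substance is there; but the detour through explicit generators, the misdiagnosed obstacle, and the appeal to the surjective-hull regularity machinery make the argument longer and obscure the simple structure the paper exploits.
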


\begin{proof}
Note that,
\[
\BN^{inj}_{(w, p', p)}=\Big(\BN^{reg}_{(w, p', p)}\Big)^{inj}=\Big(\BN^{dual}_{(w, p, p')}\Big)^{inj}\subseteq \Big(\BN^{sur}_{(w, p, p')}\Big)^{dual}.
\]
 Conversely, if $T\in  \Big(\BN^{sur}_{(w, p, p')}\Big)^{dual}(X, Y)$, then $T^*\in \BN^{sur}_{(w, p, p')}(Y^*, X^*)$. Thus, 
\[
T^*(U_{Y^*})\subseteq\Big\{ \sum_{n=1}^\infty \lambda_{n+1}w_{n+1}y_n: (\lambda_n)\in U_{\ell_p}\Big\}, \mbox{ with } (y_n)\in \ell^w_{p'}(X^*).
\]
By Hahn-Banach Theorem, there exists $y^*\in U_{Y^*}$ such that
\begin{align*}
\norm{Tx}&=\abs{y^*(Tx)}=\abs{(T^*y^*)x}=\abs{(\sum_{n=1}^\infty \lambda_{n+1}w_{n+1}y_n)x}\\
&\le (\sum_{n=1}^\infty \abs{\lambda_{n+1}}^p)^{1/p} (\sum_{n=1}^\infty\abs{(w_{n+1}y_n)x}^{p'})^{1/p'}\\
&\le \norm{B_w(x_n(x))_n}_{p'}, \mbox{ for some } (x_n)\in \ell^{w^*}_{p'}(X^*).
\end{align*}
 Hence, $T\in \BN^{inj}_{(w, p', p)}(X, Y)$ by Theorem \ref{theorem_havana}.
\end{proof}

 It is also known that the dual operator ideal of every injective operator ideal is surjective, see Proposition 2, section 4.7 \cite{PieID}. More precisely, for the operator ideal $\BN_{(w, p, q)}$ we have the following.

\begin{theorem}
\label{theorem_westpalmbeach}
$(\BN^{inj}_{(w, p, p')})^{dual}=\BN^{sur}_{(w, p', p)}$.
\end{theorem}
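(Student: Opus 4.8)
The plan is to prove the two inclusions separately, reusing exactly the machinery that drove Theorem \ref{theorem_orlando}, with the roles of the injective and surjective hulls---and of the index pairs $(p,p')$ and $(p',p)$---interchanged. Throughout I will use that $\big(\BN^{inj}_{(w,p,p')}\big)^{dual}$ is automatically a surjective ideal (Proposition 2, section 4.7 \cite{PieID}), so that only the identification of its members is at stake. I will also use Theorem \ref{theorem_orlando} applied with $p$ replaced by $p'$ (so that $(p')'=p$), which reads $\BN^{inj}_{(w,p,p')}=\big(\BN^{sur}_{(w,p',p)}\big)^{dual}$; this swapped form is the key input for one of the two inclusions.

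For the inclusion $\BN^{sur}_{(w,p',p)}\subseteq\big(\BN^{inj}_{(w,p,p')}\big)^{dual}$ I would argue concretely, mirroring the Hahn--Banach step of Theorem \ref{theorem_orlando} but run in the opposite logical direction. Given $T\in\BN^{sur}_{(w,p',p)}(X,Y)$, Theorem \ref{theorem_miami} furnishes $P\in\BN_{(w,p',p)}(\ell^{p'},Y)$, $Px=\sum_i x_{i+1}w_{i+1}y_i$ with $(y_i)\in\ell^w_p(Y)$, such that $T(U_X)\subseteq P(U_{\ell^{p'}})$. Passing to adjoints and using this containment, for every $y^*\in U_{Y^*}$ one obtains
\[
\norm{T^*y^*}=\sup_{x\in U_X}\abs{\langle Tx,y^*\rangle}\le \sup_{\xi\in U_{\ell^{p'}}}\abs{\langle P\xi,y^*\rangle}=\norm{P^*y^*}_{\ell^p}=\norm{B_w(b_n(y^*))_n}_p,
\]
where $b_n:=J_Yy_{n-1}\in Y^{**}$. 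Since $(y_i)\in\ell^w_p(Y)$ we have $(b_n)\in\ell^w_p(Y^{**})\subseteq\ell^{w^*}_p(Y^{**})$, so Theorem \ref{theorem_havana} (in which $q'=p$) applies to $T^*\in\Lin(Y^*,X^*)$ and yields $T^*\in\BN^{inj}_{(w,p,p')}(Y^*,X^*)$, i.e.\ $T\in\big(\BN^{inj}_{(w,p,p')}\big)^{dual}(X,Y)$.

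For the reverse inclusion $\big(\BN^{inj}_{(w,p,p')}\big)^{dual}\subseteq\BN^{sur}_{(w,p',p)}$ I would exploit Theorem \ref{theorem_orlando} together with regularity. If $T\in\big(\BN^{inj}_{(w,p,p')}\big)^{dual}(X,Y)$ then $T^*\in\BN^{inj}_{(w,p,p')}(Y^*,X^*)$, and the swapped form of Theorem \ref{theorem_orlando} gives $\BN^{inj}_{(w,p,p')}=\big(\BN^{sur}_{(w,p',p)}\big)^{dual}$, whence $T^{**}\in\BN^{sur}_{(w,p',p)}(X^{**},Y^{**})$. Since $J_YT=T^{**}J_X$ and $\BN^{sur}_{(w,p',p)}$ is an operator ideal, $J_YT\in\BN^{sur}_{(w,p',p)}(X,Y^{**})$, that is $T\in\big(\BN^{sur}_{(w,p',p)}\big)^{reg}$. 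Finally Proposition \ref{morning} asserts that $\BN^{sur}_{(w,p',p)}$ is regular, so $\big(\BN^{sur}_{(w,p',p)}\big)^{reg}=\BN^{sur}_{(w,p',p)}$ and therefore $T\in\BN^{sur}_{(w,p',p)}(X,Y)$, as required.

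The hard part will be the bookkeeping across the bidual and the distinction between $\ell^w_p$ and $\ell^{w^*}_p$. In the first inclusion one must check that the adjoint computation deposits the coefficient sequence in $\ell^{w^*}_p(Y^{**})$ exactly as Theorem \ref{theorem_havana} demands; this is harmless precisely because the honest vectors $y_i\in Y$ embed into the stronger class $\ell^w_p(Y^{**})$. In the second inclusion the whole difficulty is that dualizing twice moves the operator into $\Lin(X^{**},Y^{**})$, and the descent back to $T\colon X\to Y$ is not automatic; it is \emph{only} the regularity of $\BN^{sur}_{(w,p',p)}$ (Proposition \ref{morning}) that licenses replacing $J_YT\in\BN^{sur}_{(w,p',p)}$ by $T\in\BN^{sur}_{(w,p',p)}$. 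Verifying that the index substitution $(p,p')\mapsto(p',p)$ is carried consistently through Theorems \ref{theorem_havana}, \ref{theorem_miami} and \ref{theorem_orlando} is the single place where an error would most easily creep in.
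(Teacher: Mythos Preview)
Your argument is correct, but it follows a different route from the paper's. The paper dispatches the theorem in a single chain of equalities
\[
\BN^{sur}_{(w,p',p)}=(\BN^{sur}_{(w,p',p)})^{reg}=(\BN^{reg}_{(w,p',p)})^{sur}=(\BN^{dual}_{(w,p,p')})^{sur}=(\BN^{inj}_{(w,p,p')})^{dual},
\]
invoking Proposition~\ref{morning}, Lemma~\ref{afternoon}, and two abstract identities from Pietsch (18.1.6 for $\BN^{reg}_{(w,p',p)}=\BN^{dual}_{(w,p,p')}$ and 4.7.16 for $(\U^{dual})^{sur}=(\U^{inj})^{dual}$). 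You instead prove the two inclusions separately: the first by passing to adjoints and feeding the concrete description of $P^*$ coming from Theorem~\ref{theorem_miami} into the Terzio\u{g}lu-type criterion of Theorem~\ref{theorem_havana}; the second by applying the index-swapped Theorem~\ref{theorem_orlando} to $T^*$, landing on $T^{**}$, and then descending via the regularity of $\BN^{sur}_{(w,p',p)}$. Both proofs hinge on Proposition~\ref{morning}, but yours replaces Lemma~\ref{afternoon} and the two external Pietsch citations by the paper's own Theorems~\ref{theorem_havana}, \ref{theorem_miami}, and \ref{theorem_orlando}. The payoff of the paper's approach is brevity and the demonstration that the result is purely formal ideal theory; the payoff of yours is self-containment (no appeal to 18.1.6 or 4.7.16) and a transparent parallel with the proof of Theorem~\ref{theorem_orlando}. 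Your bookkeeping with $\ell^w_p(Y)\hookrightarrow\ell^{w^*}_p(Y^{**})$ via $J_Y$ is exactly what is needed to make Theorem~\ref{theorem_havana} apply to $T^*$, and the identity $J_YT=T^{**}J_X$ is the standard device for invoking regularity.
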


\begin{proof}
By Proposition \ref{morning}, Lemma \ref{afternoon}, 18.1.6 \cite{PieID} and 4.7.16 \cite{PieID}, we have

\[\BN^{sur}_{(w, p', p)}=(\BN^{sur}_{(w, p', p)})^{reg}=(\BN^{reg}_{(w, p', p)})^{sur}=(\BN^{dual}_{(w, p, p')})^{sur}=(\BN^{inj}_{(w, p, p')})^{dual}.
\]
\end{proof}

 A linear and bounded operator $T$ acting on $X$ is said to be \emph{dual hypercyclic} when both $T$ and its adjoint $T^*:X^*\to X^*$ are hypercyclic. A necessary condition for the existence of dual hypercyclic  operator on $X$ is that both $X$ and $X^*$ are separable, thus $\ell^1$ does not support such an operator. Also it is  known that any infinite dimensional Banach space $X$ with shrinking symmetric basis  admits a dual hypercyclic operator  \cite{Pet}. In this way, our work has connections with dual hypercyclicity of operators belonging either to the injective or the surjective hull of $\BN_{(w, p, q)}$. 

 \bibliographystyle{amsplain}

\end{document}